\newtheorem{teo}{Theorem}
\newtheorem{lema}[teo]{Lemma}
\newtheorem{obs2}[teo]{Remark}
\newcommand{\pr}{\mathfrak{p}}
\newcommand{\Q}{\mathbb{Q}}
\newcommand{\Z}{\mathbb{Z}}
\newcommand{\F}{\mathbb{F}}
\newcommand{\N}{\mathbb{N}}
\newcommand{\ol}{\mathcal O}
\newcommand{\m}{\frak m}
\newcommand{\psqrt}{\;{}_{\pi}\!\!\sqrt}
\newcommand{\liminv}{\displaystyle \lim_{\leftarrow}}
\newcommand{\plim}[1]{\displaystyle{\lim_{\stackrel{\longleftarrow}{#1}}}\,}
\newcommand{\ilim}[1]{\displaystyle{\lim_{\stackrel{\longrightarrow}{#1}}}\,}
\newcommand{\dlog}{{\rm dlog\,}}
 \newcommand{\firstpageR}{1}
\begin{document}
\title[Reciprocity laws for local fields]{Reciprocity laws \`a la Iwasawa-Wiles}

\author{Francesc Bars, Ignazio Longhi}

\maketitle

\begin{center}
\begin{small}
\begin{abstract}
This paper is a brief survey on explicit reciprocity laws of
Artin-Hasse-Iwasawa-Wiles type for the Kummer pairing on local
fields.
\end{abstract}
\end{small}
\end{center}

\begin{section}{Introduction} Let $K$ be a complete discrete
valuation field, $\mathcal{O}_K$ its ring of integers,
$\mathfrak{m}_K$ its maximal ideal and $k_K$ the residue field.
Suppose that $K$ is an $\ell$-dimensional local field: this means
that there is a chain of fields $K_{\ell}=K, K_{\ell-1},\ldots, K_0$
where $K_{i}$ is a complete discrete valuation field with residue
field $K_{i-1}$ and $K_0$ is a finite field. We shall always assume
that $char(K_0)=p$.

Suppose $char(k_K)=p>0$: then we have the reciprocity law map
\begin{equation}\label{eq1}
(\;\, ,K^{ab}/K):K_{\ell}^M(K)\rightarrow Gal(K^{ab}/K),
\end{equation}
where $K^{ab}$ is the maximal abelian extension of $K$, $Gal$
denotes the Galois group and $K_{\ell}^M$ is the Milnor
$K$-theory.

Assume $char(K)=0$ and $\zeta_{p^m}\in K$, where $\zeta_{p^m}$ is a
primitive $p^m$-th root of unity. The classical Hilbert symbol
$(\;,\;)_m:K^*\times
K_{\ell}^{M}(K)\rightarrow<\zeta_{p^m}>=:\boldsymbol{\mu}_{p^m}$ is:
\begin{equation}\label{eq2}
(\alpha_0,\{\alpha_1,\ldots,\alpha_{\ell}\})_m:=\frac{\beta^{(\{\alpha_1,\ldots,\alpha_{\ell}\},K^{ab}/K)}}{\beta},
\end{equation}
where $\beta$ is a solution of $X^{p^m}=\alpha_0$ and
$(\{\alpha_1,\ldots,\alpha_{\ell}\},K^{ab}/K)$ is the element
given by the reciprocity law map.

When $\ell=1$, $K$ is the completion at some place of a global field
(i.e., a finite extension of $\mathbb{F}_p((T))$ or of the $p$-adic
field $\mathbb{Q}_p$), $K_1^M(K)\cong K^*$ and $(\;,K^{ab}/K)$ is
the classical norm symbol map of local class field theory.

Historically there was deep interest to compute this Hilbert
symbol (or, better, Kummer pairing) in terms of analytic objects,
as a step in the program of making local class field theory
completely explicit. Vostokov \cite{vos} suggests the existence of
two different branches of explicit reciprocity formulas: Kummer's
type and Artin-Hasse's type (later extended by Iwasawa and Wiles).
Kummer's reciprocity law \cite{kum} is
\begin{teo}[Kummer 1858] Let $K=\mathbb{Q}_p(\zeta_p)$, $p\neq 2$,
and $\alpha_0,\alpha_1$ principal units. Then
$$(\alpha_0,\alpha_1)_1=\zeta_p^{res(\log \tilde{\alpha_0}(X){\dlog}\tilde{\alpha_1}(X)X^{-p})}$$
where
$\tilde{\alpha_0}(X),\tilde{\alpha_1}(X)\in\mathbb{Z}_p[[X]]^*$
are power series such that $\tilde{\alpha_1}(\zeta_p-1)=\alpha_1$,
$\tilde{\alpha_0}(\zeta_p-1)=\alpha_0$, $res$ means the residue
and $\dlog$ is the logarithmic derivative.
\end{teo}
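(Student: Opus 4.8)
The plan is to pass from the Artin-theoretic definition \eqref{eq2} to the analytic formula by reducing to a bilinear identity on power series and then feeding in an Iwasawa-type trace form of the Kummer pairing. First I would fix the uniformizer $\pi=\zeta_p-1$, so that $\mathcal O_K=\Z_p[\pi]=\Z_p[X]/(g(X))$, where $g(X):=\Phi_p(1+X)=\big((1+X)^p-1\big)/X$ is Eisenstein of degree $p-1$, with $g(0)=p$ and, crucially, $g(X)\equiv X^{p-1}\pmod p$. Each principal unit is $\tilde\alpha(\pi)$ for some $\tilde\alpha(X)\in\Z_p[[X]]^*$, and the first task is to show the right-hand side does not depend on the chosen lifts $\tilde\alpha_0,\tilde\alpha_1$. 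This uses $\mathrm{res}(df)=0$ for the exact/logarithmic pieces together with $g\equiv X^{p-1}\pmod p$, which governs the ambiguity $\tilde\alpha\mapsto\tilde\alpha\,(1+gh)$ modulo $p$-th powers. Both sides are then bimultiplicative — the left by the standard formalism of the Hilbert symbol, the right because $\log\tilde\alpha_0$ is additive in $\alpha_0$ and $\dlog\tilde\alpha_1$ in $\alpha_1$ — so it is enough to check the identity on topological generators of $(1+\mathfrak m_K)/(1+\mathfrak m_K)^p$.

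The engine is an explicit trace form of the pairing, of the kind later systematized by Iwasawa and Wiles:
$$(\alpha_0,\alpha_1)_1=\zeta_p^{\,\tfrac1p\,\mathrm{Tr}_{K/\Q_p}\!\left(\log\alpha_0\cdot\mathcal D\alpha_1\right)},\qquad \mathcal D\alpha_1:=(1+\pi)\,\frac{\tilde\alpha_1'(\pi)}{\tilde\alpha_1(\pi)},$$
where $\mathcal D$ is the derivation attached to the invariant differential $d\zeta_p/\zeta_p$ of the formal multiplicative group. I would obtain this by taking $\beta$ with $\beta^p=\alpha_0$, computing the action of $(\alpha_1,K^{ab}/K)$ on $\beta$ inside the tower $K\subset K(\zeta_{p^2})$ — where the local Artin map is explicit — and linearizing through the $p$-adic logarithm, which converges because $\alpha_0$ is a principal unit.

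The last step converts the trace into the residue by trace–residue duality for $K/\Q_p$. The different is $\mathfrak d_{K/\Q_p}=(g'(\pi))$ with $g'(\pi)=p(1+\pi)^{p-1}/\pi$, and the classical dual-basis identity reads
$$\mathrm{Tr}_{K/\Q_p}\!\big(H(\pi)/g'(\pi)\big)=\mathrm{res}\big(H(X)\,dX/g(X)\big).$$
I would apply it with $H$ chosen so that $H(\pi)/g'(\pi)=F(\pi)/p$, that is $H(X)=F(X)(1+X)^{p-1}/X$, where $F=\log\alpha_0\cdot\mathcal D\alpha_1$; this rewrites the exponent as $\mathrm{res}\big(F(X)\,(1+X)^{p-1}X^{-1}\,dX/g(X)\big)$. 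Reducing modulo $p$ through $g\equiv X^{p-1}$ and $(1+X)^{p-1}\equiv(1+X^p)(1+X)^{-1}$, the weight becomes $X^{-p}(1+X^p)(1+X)^{-1}\,dX$. Splitting $(1+X^p)(1+X)^{-1}=(1+X)^{-1}+X^p(1+X)^{-1}$, the second summand cancels $X^{-p}$ and yields a holomorphic form with zero residue, while the first leaves $X^{-p}(1+X)^{-1}$, whose factor $(1+X)^{-1}$ cancels the $(1+\pi)$ built into $\mathcal D$. What survives is exactly $\mathrm{res}\big(\log\tilde\alpha_0(X)\,\dlog\tilde\alpha_1(X)\,X^{-p}\big)$, as claimed. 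The main obstacle is the middle step: the trace form is the genuine explicit-reciprocity content, and establishing it requires controlling the action of the Artin symbol on the Kummer extension $K(\alpha_0^{1/p})$; by comparison, Step 1 and the final residue computation are routine once the differential is correctly normalized by $d\zeta_p/\zeta_p$.
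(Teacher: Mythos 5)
The paper does not actually prove this statement: it is quoted as Kummer's 1858 theorem (cited to \cite{kum}) and serves only as the historical opening of the survey, so your argument has to stand on its own. It does not, and the failure is in the step you yourself identify as the engine. The trace formula $(\alpha_0,\alpha_1)_1=\zeta_p^{\,\frac1p\mathrm{Tr}_{K/\mathbb{Q}_p}(\log\alpha_0\cdot\mathcal{D}\alpha_1)}$ cannot hold for arbitrary principal units $\alpha_0$. Test it on $\alpha_0=\zeta_p=1+\pi$: then $\log\alpha_0=\frac1p\log(\zeta_p^p)=0$, so your formula forces $(\zeta_p,\alpha_1)_1=1$ for every $\alpha_1$; but by the Artin--Hasse formula (Theorem \ref{theo2} of the paper) together with skew-symmetry of the symbol, $(\zeta_p,\alpha_1)_1=\zeta_p^{\mathrm{Tr}_{K/\mathbb{Q}_p}(\log\alpha_1)/p}$, which is nontrivial already for $\alpha_1=1+p$. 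This is not a repairable slip: any formula in which $\alpha_0$ enters only through the element $\log\alpha_0\in K$ is of Iwasawa type and is valid only under a depth condition such as $val_K(\alpha_0-1)>\frac{2val_K(p)}{p-1}$, exactly as recorded in Theorem \ref{theo2}. Kummer's residue formula instead involves the power series $\log\tilde{\alpha_0}(X)$, which remembers information that evaluation at $\pi$ destroys ($\log(1+X)\neq 0$ as a series even though $\log(1+\pi)=0$). So the proposed derivation of the residue (``Kummer branch'') formula from a trace-of-elements (``Artin--Hasse--Iwasawa branch'') formula collapses precisely on the distinction the paper's introduction is at pains to draw between the two branches; worse, natural generators of $(1+\mathfrak{m}_K)/(1+\mathfrak{m}_K)^p$ such as $\zeta_p$ itself are where it breaks.

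A second, smaller problem is the duality identity $\mathrm{Tr}_{K/\mathbb{Q}_p}\bigl(H(\pi)/g'(\pi)\bigr)=\mathrm{res}\bigl(H(X)\,dX/g(X)\bigr)$. The sum of the residues of $H\,dX/g$ at the roots of $g$ does equal the trace, but the residue at $X=0$ recovers it only after subtracting the residue at infinity, which is nonzero already for $H=X^{p-2}$ (trace equal to $1$, residue at $0$ equal to $0$). Controlling this discrepancy modulo $p$, via $g\equiv X^{p-1}\pmod p$ and a truncation of the Laurent tails of the series involved, is where the genuine work of a Kummer-type proof lives; as written, your final conversion step assumes it away. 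Your step 1 (independence of the lifts) and the reduction to generators are sound in outline, but the proof as proposed does not go through.
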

Artin-Hasse's reciprocity law \cite{artinhasse} is:
\begin{teo}[Artin-Hasse 1928]\label{theo2} Let $K=\mathbb{Q}_p(\zeta_{p^m})$,
$p\neq 2$, and $\alpha_1\in K^*$ a principal unit. Take
$\pi=\zeta_{p^m}-1$ a prime of $K$: then
$$(\zeta_{p^m},\alpha_1)_m=\zeta_{p^m}^{Tr_{K/\mathbb{Q}_p}(-\log \alpha_1)/p^m},\ \
(\pi,\alpha_1)_m=\zeta_{p^m}^{Tr_{K/\mathbb{Q}_p}(\pi^{-1}\zeta_{p^m}\log
\alpha_1)/p^m},$$ where $\log$ is the $p$-adic logarithm. Later
Iwasawa \cite{iwasawa} gave a formula for $(\alpha_0,\alpha_1)_m$
with $\alpha_0$ any principal unit such that
$val_{K}(\alpha_0-1)>\frac{2val_K(p)}{p-1}$.
\end{teo}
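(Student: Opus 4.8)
The plan is to reduce both formulas to the defining relation $(\alpha_0,\alpha_1)_m=\beta^{\sigma-1}$, where $\beta^{p^m}=\alpha_0$ and $\sigma=(\alpha_1,K^{ab}/K)$ is the local Artin symbol of $\alpha_1$, and then to make $\sigma$ explicit through the Lubin-Tate description of local class field theory for the multiplicative formal group $\hat{\mathbb{G}}_m$ over $\mathbb{Q}_p$, for which the cyclotomic tower $\mathbb{Q}_p(\zeta_{p^n})$ is exactly the Lubin-Tate tower attached to the uniformizer $p$. Throughout I will use two analytic facts about the $p$-adic logarithm on principal units: that $\log u\equiv u-1$ modulo $(u-1)^2$, and that $\log\circ N_{K/\mathbb{Q}_p}=Tr_{K/\mathbb{Q}_p}\circ\log$, the latter because $\log$ commutes with the embeddings of $K$ and is a homomorphism on $1+\mathfrak{m}_K$.

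\emph{First formula.} Take $\beta=\zeta_{p^{2m}}$, so that $(\zeta_{p^m},\alpha_1)_m=\zeta_{p^{2m}}^{\,\sigma-1}$. Since $\sigma$ fixes $K$ it fixes $\zeta_{p^m}$, so its cyclotomic value $c$ on $\zeta_{p^{2m}}$ satisfies $c\equiv 1\pmod{p^m}$; hence $\zeta_{p^{2m}}^{\,c-1}=\zeta_{p^m}^{(c-1)/p^m}$ indeed lands in $\boldsymbol{\mu}_{p^m}$. By the norm-compatibility (functoriality) of the reciprocity maps, the restriction of $\sigma$ to $\mathbb{Q}_p(\zeta_{p^{2m}})$ equals the Artin symbol of $N_{K/\mathbb{Q}_p}(\alpha_1)$ for $\mathbb{Q}_p$, and the explicit Lubin-Tate law for $\hat{\mathbb{G}}_m$ gives $c\equiv N_{K/\mathbb{Q}_p}(\alpha_1)^{-1}\pmod{p^{2m}}$. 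Combining the two analytic facts, $c-1\equiv-\log N_{K/\mathbb{Q}_p}(\alpha_1)\equiv-Tr_{K/\mathbb{Q}_p}(\log\alpha_1)\pmod{p^{2m}}$, whence $(c-1)/p^m\equiv -Tr_{K/\mathbb{Q}_p}(\log\alpha_1)/p^m\pmod{p^m}$ and the first formula follows.

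\emph{Second formula.} This is the genuinely hard case, because $\beta=\pi^{1/p^m}$ generates a wildly ramified Kummer extension of $K$ and $\sigma=(\alpha_1,K)$, being attached to a unit, lies in the inertia subgroup; one must therefore describe the action of inertia on $\beta$, not merely a cyclotomic value. I would again proceed through Lubin-Tate theory: for $\hat{\mathbb{G}}_m$ the formal logarithm is $\lambda(X)=\log(1+X)$, so $\lambda'(X)=(1+X)^{-1}$ and $1/\lambda'(\pi)=1+\pi=\zeta_{p^m}$. The factor $\pi^{-1}\zeta_{p^m}=\pi^{-1}/\lambda'(\pi)$ in the statement is precisely the combination produced when the contribution of the uniformizer is transported along the formal group (the $\zeta_{p^m}=1/\lambda'(\pi)$ part) and normalized against $\pi$. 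Concretely, one tracks how $\sigma$ moves $\beta$ by the explicit Lubin-Tate reciprocity for the extension attached to $\pi$, converts the resulting expression into a trace by the same $\log\circ N=Tr\circ\log$ device, and reads off the stated $\pi^{-1}\zeta_{p^m}\log\alpha_1$ term.

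The main obstacle is this second formula. The purely formal identities at hand — bilinearity, skew-symmetry $(\alpha,\beta)_m(\beta,\alpha)_m=1$, and the Steinberg relation $(\alpha,1-\alpha)_m=1$ applied to $1-\zeta_{p^m}=-\pi$ — only determine the symbol up to its value on the unit-unit part, which is already governed by the first formula; they do not pin down the uniformizer contribution. Capturing that contribution forces the explicit reciprocity computation for the ramified extension $K(\pi^{1/p^m})/K$, and it is there that the derivative $\lambda'(\pi)$ of the formal logarithm, hence the factor $\zeta_{p^m}/\pi$, necessarily enters. Iwasawa's extension to a general principal unit $\alpha_0$ with $val_K(\alpha_0-1)>\frac{2\,val_K(p)}{p-1}$ would then be obtained by writing $\alpha_0$ multiplicatively in terms of $\zeta_{p^m}$, $\pi$, and principal units, applying bilinearity, and invoking the convergence range of $\log$ that the hypothesis on $val_K(\alpha_0-1)$ guarantees.
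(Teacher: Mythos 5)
The paper itself does not prove this theorem: it is quoted as a historical result, cited to Artin--Hasse and Iwasawa, and the survey only records in \S 3 the machinery ((\ref{eq55}), (\ref{eq56}), the comparison $(\;,\;)=[\;,\;]$) from which it follows. Measured against that, your treatment of the first formula is correct and is exactly the mechanism of (\ref{eq55}): a unit acts on the cyclotomic (Lubin--Tate) tower through the inverse of its norm, and the congruence $N_{K/\Q_p}(\alpha_1)^{-1}-1\equiv -\log N_{K/\Q_p}(\alpha_1)\equiv -Tr_{K/\Q_p}(\log\alpha_1)\pmod{p^{2m}}$ is legitimate because $N_{K/\Q_p}(\alpha_1)\equiv 1\pmod{p^m}$ and $p\neq 2$.

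The genuine gap is the second formula, and your text in effect concedes it. The field $K(\pi^{1/p^m})$ is a ramified Kummer extension of the multiplicative group; it is \emph{not} a torsion extension of $\widehat{\mathbb{G}}_m$ (those are precisely the cyclotomic fields already used above), so there is no ``explicit Lubin--Tate reciprocity for the extension attached to $\pi$'' to invoke: describing how the inertial element $\sigma=(\alpha_1,K^{ab}/K)$ moves $\pi^{1/p^m}$ \emph{is} the content of the theorem, and you replace that computation by the observation that $\zeta_{p^m}=1/\lambda'(\pi)$, which explains the shape of the answer but derives nothing. In the framework of this survey the uniformizer case is reached indirectly: one uses skew-symmetry to put the principal unit in the formal-group slot, the $\dlog$ reduction (\ref{eq56}) and the norm-limit trick (\ref{eq9,5}) to push everything onto $(\;,\varepsilon_n)_n$ at a deep level, and only then evaluates via (\ref{eq55}); Artin--Hasse's own argument was global (product formula), and Iwasawa's was the local bootstrap just described. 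Your last paragraph also inverts the logic of Iwasawa's extension: a general principal unit $\alpha_0$ cannot be usefully ``written multiplicatively in terms of $\zeta_{p^m}$, $\pi$ and principal units'' --- that reduction is circular --- so his formula does not follow from the two displayed ones by bilinearity; it is the other way around, the general unit formula (via continuity and $\dlog$, i.e.\ (\ref{eq56})) being what makes the $\pi$-formula accessible.
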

Roughly speaking the difference between these two branches is that
Kummer's type refers to residue formulas involving a power series
for each component of the pairing, while Artin-Hasse-Iwasawa's
type are non-residue formulas evaluating some generic series at
the $K$-theory component of the Hilbert pairing.

There is a big amount of articles in the literature that contribute
to and extend the above seminal works of Kummer and
Artin-Hasse-Iwasawa. The Hilbert symbol can be extended to
Lubin-Tate formal groups, and also to $p$-divisible groups. These
extensions are defined from Kummer theory: hence one often speaks of
Kummer pairing instead of Hilbert symbol. Wiles extended Iwasawa's
result to Lubin-Tate formal groups.

In this survey we intend to review some of the results on
Artin-Hasse-Iwasawa-Wiles' type reciprocity laws. We list different
variants of the Kummer pairing; afterwards we sketch some of the
main points in the proof of the Artin-Hasse-Iwasawa-Wiles
reciprocity law for 1-dimensional local fields. Finally we review
Kato's generalization of Wiles' reciprocity law, which is done in a
cohomological setting. Kato's work also extends the explicit
reciprocity law to higher dimensional local fields and to schemes.

For Kummer's type, one should cite also a lot of contributions by
Vostokov, and many others, for example Shafarevich, Kneser,
Br\"uckner, Henniart, Fesenko, Demchenko, and Kato and Kurihara in
the cohomological setting. We refer to Vostokov's paper \cite{vos}
for a list of results and references for reciprocity laws in this
case, adding to it the recent works of Benois \cite{ben} (for
cyclotomic extensions), Cao \cite{cao} (for Lubin-Tate formal
groups) and Fukaya \cite{fukaya} and the works of Ankeny and
Berrizbeitia \cite{ber}. Given $\alpha$ a unit of $\mathcal{O}_K$,
$K$ a totally ramified finite extension of $\mathbb{Q}_p$ and
$\pi$ a uniformizer of $K$, $\alpha$ has a factorization modulo
${K^*}^{p^n}$ by a product of $E(\pi^k)$, $k\in\N$, where $E$ is
the Artin-Hasse exponential. The contribution by Shafarevich is to
compute $(\alpha,\beta)_n$ in terms of the above factorization for
$\alpha$ and $\beta$. Berrizbeitia \cite{ber} recovers Shafarevich
results with different methods
 but also using the above factorization.

{\bf Notations}\\
Let $L$ be a valuation field: we denote $\mathcal{O}_L$ the ring of
integers, $\mathfrak{m}_L$ its maximal ideal, $k_L$ the residue
field and $val_L$ the valuation function. We write $\overline{L}$
for the algebraic closure of $L$ if $char(L)=0$ and the separable
closure otherwise; $\hat{\overline{L}}$ will be the completion
(which is algebraically closed).  Denote $Gal(\overline{L}/L)$ by
$G_L$ and the continuous Galois cohomology group $H^i(G_L,A)$ by
$H^i(L,A)$.

For any ring $R$, $R^*$ means the invertible elements of $R$. As
usual $\mathbb{F}_q$ is the finite field of cardinality $q$.

The symbol $M$ will always denote a finite extension of
$\mathbb{Q}_p$: $M_0$ is the subfield of $M$ such that $M/M_0$ is
totally ramified and $M_0/\Q_p$ is unramified.

For $N$ a $\Z_p$-module, $N(r)$ will be its $r$-th Tate twist,
$r\in\Z$ (recall that $\Z_p(1):=\plim{n}\boldsymbol{\mu}_{p^n}$)

\end{section}

\begin{section}{The Kummer pairing} Let $K$ be an
 $\ell$-dimensional local field. Then $K$ is isomorphic to one of
 \begin{itemize}
\item $\mathbb{F}_q((X_1))\ldots((X_{\ell}))$
if $char(K)=p$
\item $M((X_1))\ldots((X_{\ell-1}))$ if
$char(K_1)=0$
\item a finite\ extension\ of
$M\{\{T_1\}\}\ldots\{\{T_n\}\}((X_{n+2}))\ldots((X_{\ell}))$
 if\\
$char(K_{n+1})=0$ and $char(K_n)=p>0$ \end{itemize} where
$L\{\{T\}\}$ is
$$\left\{\sum_{-\infty}^{+\infty}a_iT^i:a_i\in L,
\inf(val_L(a_i))>-\infty,\displaystyle{\lim_{i\rightarrow
-\infty}}val_{L}(a_i)=+\infty \right\}.$$ Defining
$val_{L\{\{T\}\}}(\sum a_iT^i):=\min \{val_L(a_i)\}$ makes
$L\{\{T\}\}$ a discrete valuation field with residue field
$k_{L}((t))$ (see Zhukov \cite{zhu}).

Since we assume $char(k_K)=p>0$ it follows that the
$\ell$-dimensional local field $K$ has residue field isomorphic to
$\mathbb{F}_q((X_1))\ldots((X_{\ell-1}))$. Then we have the
reciprocity map (\ref{eq1}) obtained by Kato \cite{katoK} (see the
exposition \cite{kur2}), which had already been proved by Parshin
\cite{par} when $char(K)=p>0$.

Now we introduce the Kummer pairing through the classical Hilbert
symbol (\ref{eq2}). Assume first $char(K)=0$ and $\zeta_{p^m}\in K$
and restrict the Hilbert pairing (\ref{eq2}) to
$(1+\mathfrak{m}_K)\times K_{\ell}^{M}(K)$:  then
$$(1+\delta_0,\{\alpha_1,\ldots,\alpha_{\ell}\})_m=\frac{\beta^{(\{\alpha_1,\ldots,\alpha_{\ell}\}
,K^{ab}/K)}}{\beta}=\zeta_{p^m}^{c},$$ where $\beta$ is a solution
of $X^{p^m}=1+\delta_0$ and $c\in\mathbb{Z}/p^m\Z$ is determined by $\delta_0$ and
the $\alpha_i$'s. Recall that Milnor $K$-groups $K_{n}^M(K)$ are
defined as $(K^*)^{\otimes^n}$ modulo the subgroup generated by
$a\otimes (1-a)$, $a\in K^*-1$.

We rewrite the above restricted pairing as
$$\mathfrak{m}_K\times K_{\ell}^M(K)\rightarrow
W_{p^m}:=\{\zeta_{p^m}^j-1|j\in\mathbb{Z}\}=\{w\in\hat{\overline{K}}|(w+1)^{p^m}-1=0\}$$
\begin{equation}\label{eq3}
(\delta_0,\{\alpha_1,\ldots,\alpha_{\ell}\})_m:=\zeta_{p^m}^c-1.
\end{equation}
Recall that $\widehat{\mathbb{G}}_m$ is the formal group given by
$X+_{\widehat{\mathbb{G}}_m}Y:=X+Y+XY$ and
$-_{\widehat{\mathbb{G}}_m}X=\sum_{i=1}^{\infty}(-1)^iX^i$.
Summation $p^m$-times in $\widehat{\mathbb{G}}_m$ is given by
$[p^m](X)=(1+X)^{p^m}-1$. Observe that
\begin{equation}\label{eq4}
(\delta_0,\{\alpha_1,\ldots,\alpha_{\ell}\})_m=((\{\alpha_1,\ldots,\alpha_{\ell}\},K^{ab}/K)
-_{\widehat{\mathbb{G}}_m}1)(\beta)\end{equation} where $\beta$ is
a solution of $(1+X)^{p^{m}}-1=\delta_0$.

This reformulation of the classical Kummer pairing for
$\widehat{\mathbb{G}}_m$ can be extended to classical Lubin-Tate
formal groups, to Drinfeld modules and in greater generality to
$p$-divisible groups.

\begin{subsection}{Classical Lubin-Tate formal groups}
Lubin-Tate formal groups are defined for 1-dimensional local
fields: classically the emphasis is on the unequal characteristic
case and in this survey we shall restrict to such setting when
considering Lubin-Tate formal groups. We refer the interested
reader to \cite{haz} and \cite{serre} for proofs and detailed
explanations.

 Recall that a formal 1-dimensional commutative group law $F$
over $\mathcal{O}_M$ is $F(X,Y)\in\mathcal{O}_M[[X,Y]]$,
satisfying:\\ (i) $F(X,Y)\equiv X+Y \ mod\ deg\ 2$; \\(ii)
$F(X,F(Y,Z))=F(F(X,Y),Z)$; \\(iii) $F(X,Y)=F(Y,X)$.

We fix $\pi$ a uniformizer of $M$. Define
$$\mathcal{F}_{\pi}:=\{f\in \mathcal{O}_M[[X]]\big{|}f\equiv\pi X\
mod\ deg\ 2, f\equiv X^{p^{l}}\ mod\ \mathfrak{m}_M\},$$ where
$p^{l}$ is the number of elements of $k_{M}$.

 \begin{teo}[Lubin-Tate] We have:\\
1. for every $f\in \mathcal{F}_{\pi}$ it exists a unique
1-dimensional commutative formal group law $F_f$ defined over
$\mathcal{O}_M$ and called of Lubin-Tate, such that $f\in
End(F_f)$ (i.e. $F_f(f(X),f(Y))=f(F_f(X,Y))$).\\ 2. Given
$f,g\in\mathcal{F}_{\pi}$ then $F_{g}$ and $F_f$ are isomorphic
over $\mathcal{O}_M$.\\ 3. Given $f\in \mathcal{F}_{\pi}$ and
$a\in\mathcal{O}_M$, it exists a unique
$[a]_f(X)\in\mathcal{O}_M[[X]]$ with $[a]_f\in End(F_f)$ and $
[a]_f(X)\equiv a X\ mod\ deg\ 2$; the map $a\mapsto [a]_f$ gives
an embedding $\mathcal{O}_M\rightarrow End(F_f)$. We denote also
$[a]_{F_f}$ by $[a]_f$.
\end{teo}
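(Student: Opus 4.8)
The plan is to derive all three statements from a single approximation lemma, which is the technical heart of the theory. The lemma states that for any two series $f,g\in\mathcal{F}_\pi$ attached to the \emph{same} uniformizer $\pi$ and any $\mathcal{O}_M$-linear form $L(X_1,\dots,X_n)=\sum_i c_iX_i$, there is a \emph{unique} power series $\Phi\in\mathcal{O}_M[[X_1,\dots,X_n]]$ with $\Phi\equiv L\pmod{\deg 2}$ and $f(\Phi(X_1,\dots,X_n))=\Phi(g(X_1),\dots,g(X_n))$. Once this is available, each assertion follows by producing the relevant series and then pinning it down through the uniqueness clause.

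I would prove the lemma by successive approximation, constructing $\Phi=\sum_{r\ge 1}\Phi_r$ one homogeneous degree at a time. The degree-one part must be $L$, and then the defect $D(\Phi):=f(\Phi)-\Phi(g,\dots,g)$ starts in degree $\ge 2$. Suppose a truncation $\Phi^{(r)}$ of degree $r$ has been found, uniquely, with $D(\Phi^{(r)})\equiv 0\pmod{\deg r+1}$. Adding a homogeneous term $E$ of degree $r+1$ alters the degree-$(r+1)$ part of the defect by $(\pi-\pi^{r+1})E$, since $f'(0)=\pi$ and $g(X_i)\equiv\pi X_i$; so if $\Delta_{r+1}$ denotes the current degree-$(r+1)$ defect, the step is solvable, and uniquely so, precisely by $E=\Delta_{r+1}/(\pi^{r+1}-\pi)$.

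The main obstacle, and indeed the only nonformal point, is the integrality of this division: $E$ must have coefficients in $\mathcal{O}_M$. Here $\pi^{r+1}-\pi=\pi(\pi^r-1)$ with $\pi^r-1$ a unit of $\mathcal{O}_M$, so it suffices to show $\Delta_{r+1}\equiv 0\pmod\pi$. This is exactly where the congruence $f\equiv g\equiv X^q\pmod{\mathfrak{m}_M}$ is used, with $q=|k_M|=p^l$. Reducing modulo $\pi$ and using that the $q$-power Frobenius is a ring endomorphism of $k_M[[X_1,\dots,X_n]]$ fixing the reduced coefficients (which lie in $k_M=\mathbb{F}_q$), one gets $f(\Phi^{(r)})\equiv(\Phi^{(r)})^q\equiv\Phi^{(r)}(X_1^q,\dots,X_n^q)\equiv\Phi^{(r)}(g(X_1),\dots,g(X_n))\pmod\pi$, so $D(\Phi^{(r)})$, and in particular $\Delta_{r+1}$, vanishes mod $\pi$. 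This closes the induction and proves the lemma.

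Finally I would read off the three parts as uniqueness corollaries. For part 1, take $f=g$, $n=2$, $L=X+Y$ to get $F_f$ with $F_f\equiv X+Y\pmod{\deg 2}$ and $f\in End(F_f)$; commutativity holds because $F_f(X,Y)$ and $F_f(Y,X)$ solve the same $(n=2)$ problem, associativity because $F_f(F_f(X,Y),Z)$ and $F_f(X,F_f(Y,Z))$ solve the same $(n=3)$ problem for $X+Y+Z$, and $F_f(X,0)=X$ because $F_f(X,0)$ solves the $(n=1)$ problem for $X$. For part 3, take $f=g$, $n=1$, $L=aX$ to obtain $[a]_f$; that $[a]_f\in End(F_f)$ and that $a\mapsto[a]_f$ is an injective ring homomorphism (with $[a+b]_f=F_f([a]_f,[b]_f)$ and $[ab]_f=[a]_f\circ[b]_f$) all follow because the two sides of each identity solve a common approximation problem, injectivity being visible on the degree-one coefficient. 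For part 2, apply the lemma to the pair $(f,g)$ with $n=1$, $L=X$, getting $\theta\equiv X\pmod{\deg 2}$ with $f\circ\theta=\theta\circ g$; a short uniqueness check (now for the pair $(f,g)$) gives $\theta(F_g(X,Y))=F_f(\theta(X),\theta(Y))$, and since $\theta\equiv X\pmod{\deg 2}$ it is invertible, so $\theta$ is an isomorphism $F_g\cong F_f$ over $\mathcal{O}_M$.
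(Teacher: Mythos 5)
Your argument is correct and complete: it is the classical Lubin--Tate proof via the fundamental approximation lemma, with the two key points (the unit $\pi^r-1$ making the degree-by-degree division possible, and the Frobenius congruence $f\equiv g\equiv X^q \bmod \mathfrak{m}_M$ forcing the defect to vanish mod $\pi$) both correctly identified and used. The paper itself does not prove this theorem but defers to \cite{haz} and \cite{serre}; your proof is essentially the one in those references, so there is nothing to reconcile.
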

\noindent {\bf Example} The formal multiplicative group
$\widehat{\mathbb{G}}_m$ over $\Q_p$ corresponds to the Lubin-Tate
formal group with $\pi=p$ and $f=(1+X)^p-1\in\mathcal{F}_p$.\\

We denote by $F_f(B)$ the $B$-valued points of $F_f$. Consider

$$W_{F_{f},m}:=\{zeroes\ of\ [\pi^m]_{F_f}\ in\ F_f(\mathfrak{m}_{\hat{\overline{K}}})\}.$$

\begin{obs2}{\em
We want to emphasize that we have an ``embedding" of
$\mathcal{O}_M$ into $End(F_f)$, from which we get a tower of
field extensions $M(W_{F_f,m})$ in $\hat{\overline{M}}$. For
example take $F_f=\widehat{\mathbb{G}}_m$ and consider
$[p^m]_{\widehat{\mathbb{G}}_m}$ as $m$ varies in $\mathbb{N}$:
the roots of $[p^m]_{\widehat{\mathbb{G}}_m}(X)=(1+X)^{p^m}-1$ are
$\zeta_{p^m}-1$, thus $M(W_{\widehat{\mathbb{G}}_m,m})$ is the
cyclotomic extension $M(\zeta_{p^m})$. The groups $W_{F_f,m}$ are
the Lubin-Tate analogs of $\boldsymbol{\mu}_{p^m}$.}
\end{obs2}

Let $K$ be a finite extension of $M$ such that $W_{F_f,m}\subseteq
K$. The Kummer pairing for $F_f$ is:
$$(\;,\;)_m:F_f(\mathfrak{m}_K)\times K^*\rightarrow W_{F_f,m}$$
\begin{equation}\label{eq5} (a,u)_m:= ((u,K^{ab}/K)-_{F_f}1)\beta
\end{equation}
where $\beta$ is a solution of $[\pi^m]_{F_f}(X)=a$.

There is a more general notion of Lubin-Tate formal group, called
relative Lubin-Tate formal group, which involves the unique
unramified extension of $M$ of degree $d$. The corresponding
formulation of the Kummer pairing differs slightly from (\ref{eq5}).
We refer to \cite[Chapter 1, \S 1.1,\S1.4,\S4.1]{deshalit} for the
precise statement.

We also remark that there is a notion of $n$-dimensional
Lubin-Tate formal group \cite{haz}. Also in this case we have an
embedding of $\mathcal{O}_M$ into $End(F_f)$, and some generalized
Kummer pairing appears. Since Kummer pairing on $p$-divisible
groups will also include this case we do not discuss it any
further here.
\end{subsection}
\begin{subsection}{1-dimensional local fields in $char=p>0$: Drinfeld
modules} The key property of Lubin-Tate formal groups is the
embedding: $\mathcal{O}_M\rightarrow End(F_f)$. This suggests to
define a Kummer pairing for 1-dimensional local fields $K$ with
$char(K)=p>0$ in the following way.

For simplicity we take $F=\mathbb{F}_q(T)$ and put
$A=\mathbb{F}_q[T]$. Observe that
$End_{\mathbb{F}_q}({\mathbb{G}}_a/F)\cong F\{\tau\}$, where $\tau
a=a^q\tau$ for $a\in F$, $\tau^0=id$. We can think of $F\{\tau\}$ as
a subset of $F[X]$, via $\tau_0\leftrightarrow X,\tau\leftrightarrow
X^q$: then $F\{\tau\}$ consists of the additive polynomials and the
product in $F\{\tau\}$ corresponds to composition.

Drinfeld \cite{drinfeld} defined elliptic modules (now called
Drinfeld modules) as embeddings
$$\Phi: A\rightarrow End_{\mathbb{F}_q}({\mathbb{G}}_a/F)$$
$$a\mapsto \Phi_a=a+(\ldots)\tau ,$$ $\mathbb{F}_q$-linear and
non-trivial (i.e., there is $a\in A$ such that $\Phi_a$ is not
equal to $a$).

For $a\in A$ we write $\Phi[a]:=\{zeroes\ of\ \Phi_a(X)\}$. We
have that $\Phi[a]\cong (A/(a))^d$ and define $rank(\Phi):=d$. Let
$\pr=(\pi)$ be a place of $A$: under some technical assumptions
(see \cite{bl} for a reference) we can extend $\Phi$ to
$$A_{\pr}\rightarrow End_{\mathbb{F}_q}(\widehat{\mathbb{G}}_a)\cong F_{\pr}\{\{\tau\}\}$$
which we also call $\Phi$, $a\mapsto \Phi_a$, where $F_{\pr}$ is the
completion of the field $F$ at $\pr$ and $F_{\pr}\{\{\tau\}\}$ is
the ring of skew power series.

Denote by $W_{\Phi,\pi^m}$ the set of roots of $\Phi_{\pi^m}(X)$
in $\overline{F_{\pr}}$. For any finite extension $K/F_{\pr}$
containing $W_{\Phi,\pi^m}$ we define the Kummer pairing:

$$(\; ,\; )_m:\mathfrak{m}_K\times K^*\rightarrow W_{\Phi,\pi^m}$$
\begin{equation}\label{eq6}
(a,u)_m:=((u,K^{ab}/K)-1)(\beta)
\end{equation}
where $\beta$ is a root of $\Phi_{\pi}^m(X)=a$.

For example for rank 1, the simplest Drinfeld module is the
Carlitz module defined by $\Phi_{T}(\tau)=T id+\tau$. Then
$\Phi_{T^2}(\tau)=\Phi_{T}(\tau)\circ\Phi_{T}(\tau)=(T
id+\tau)\circ(T id+\tau)=T^2id+\tau
T+T\tau+\tau^2=T^2id+(T^q+T)\tau+\tau^2$ and similarly
$\Phi_{T^3}(\tau)=T^3id+(T^{2q}+T^{q+1}+T^2)\tau+(T^{q^2}+T^q+T)\tau^2+\tau^3$.
If we take $\pr=(T)$ ($\pi=T$) then
$F_{\pr}\cong\mathbb{F}_q((T))$ and $W_{\phi,T}=\{zeroes\ of\
TX+X^q\}$, $W_{\phi,T^2}=\{zeroes\ of\ T^2X+(T^q+T)X^q+X^{q^2}\}$,
$W_{\phi,T^3}=\{zeroes\ of\
T^3X+(T^{2q}+T^{q+1}+T^2)X^q+(T+T^q+T^{q^2})X^{q^2}+X^{q^3}\}$,
...
\end{subsection}
\begin{subsection}{$p$-divisible groups}

A vast extension of the theory above considers $p$-divisible
groups (also called Barsotti-Tate groups). For the definition and
main properties see \cite{tate}.

Let $G$ be a $p$-divisible group scheme over $\mathcal{O}_K$ of
dimension $d$ and finite height $h$, where $K$ is any
$\ell$-dimensional local field with $char(k_K)=p>0$. We denote by
$[p^m]_G$ the $p^m$-th power map $G\rightarrow G$ and let $G[p^m]$
be the group scheme kernel. As usual, $\mathfrak{X}(B)$ denotes
the $B$-points of the scheme $\mathfrak{X}$. We put
$W_{G,p^m}:=G[p^m](\mathcal{O}_{\overline{K}})$ and we impose
$W_{G,p^m}=G[p^m](\mathcal{O}_K)$, i.e. $W_{G,p^m}\subset K$. Then
Kummer pairing is defined by:

$$(\;,\;)_m:G(\mathfrak{m}_K)\times K_{\ell}^M(K)\rightarrow W_{G,p^m}$$
\begin{equation}\label{eq7}
(a,u)_m:= ((u,K^{ab}/K)-_{G}1)(\beta)
\end{equation} where $\beta$ is a root of $[p^m]_G(X)=a$. Finally
we observe that $p$-groups with an action of $\mathcal{O}_K$ have
also been studied \cite{tate}. In this case (\ref{eq7}) can be
reformulated with respect to $[\pi^m]_G$, $\pi$ a uniformizer of
$K$.
\end{subsection}

\begin{subsection}{Cohomological interpretation}
Let $K$ be an $\ell$-dimensional local field with $char(k_K)=p>0$
and $char(K)=0$ . The Kummer pairing also admits an interpretation
in terms of Galois cohomology. We do this in the generality of
$p$-divisible groups. Consider the exact sequence
$$0\rightarrow G[p^m](\mathfrak{m}_{\overline{K}})\rightarrow
G(\mathfrak{m}_{\overline{K}})\rightarrow
G(\mathfrak{m}_{\overline{K}})\rightarrow 0.$$ This induces:
$$\delta_{1,G,m}:G(\mathfrak{m}_K)\rightarrow
H^1(K,G[p^m](\mathcal{O}_{\overline{K}})).$$ We assume
$W_{G,p^m}\subset K$, i.e.
$G[p^m](\mathcal{O}_{\overline{K}})=G[p^m](\mathcal{O}_{{K}})$.
The Galois symbol map:
$$h^r_K:K_{r}^M(K)\rightarrow H^{r}(K,\mathbb{Z}_p(r))$$
is obtained by cup product $h^1_K\cup\ldots\cup h^1_K$, where
$h^1_K$ is the connecting morphism $h^1_K:K^*\rightarrow
H^1(K,\mathbb{Z}_p(1))$ from the usual Kummer sequence.

There is a canonical isomorphism \cite{katoK}
$H^{\ell+1}(K,\mathbb{Z}_p(\ell))\cong\mathbb{Z}_p$ defining the
following pairing:
$$\widehat{(\;,\;)}_m:G(\mathfrak{m}_K)\times K_{\ell}^M(K)\rightarrow
H^{\ell+1}(K,\mathbb{Z}_p(\ell)\otimes
G[p^m](\mathcal{O}_{K}))\cong G[p^m](\mathcal{O}_K)$$
\begin{equation}\label{eq8}
\widehat{(a,u)}_m:=(-1)^{\ell} \delta_{1,G,m}(u)\cup
h^{\ell}_K(a).
\end{equation}
Then by \cite[Proposition 6.1.1]{fukaya}
$\widehat{(a,u)}_m=((u,K^{ab}/K)-_{G}1)(\beta)=(a,u)_m$ where
$\beta$ is a root of $[p^m]_G(X)=a$.

If one could define a good analog in characteristic $p$ of the
Galois symbol maps for the cohomological groups from Illusie
cohomologies used by Kato \cite{katoK} to obtain the reciprocity
law map $(\;,K^{ab}/K)$, then a cohomological interpretation
should appear when $char(K)=p>0$ (following arguments like the
proof of \cite[Proposition 6.1.1]{fukaya}). We refer to \cite[\S
5]{kur2} for a very quick review of the definitions of these
cohomological groups and of Kato's higher local class field
theory.
\end{subsection}
\begin{subsection}{Limit forms of the Kummer pairing}
In this subsection $(\;,\;)_m$ denotes any of the pairings
(\ref{eq4}) to (\ref{eq7}).

Let $W_{\sharp,m}$ be one of $W_{F_f,\pi^{m}}$, $W_{\Phi,\pi^m}$
or $W_{G,p^m}$. We shorten $K(W_{\sharp,m})$ to $K_{\sharp,m}$ and
write $\mathcal{O}_{\sharp,m}$ (resp. $\mathfrak{m}_{\sharp,m}$)
for the ring of integers (resp. the maximal ideal). By definition
the Tate module is $T_p(\sharp):=\plim{m}W_{\sharp,m}$ where the
limit is taken with respect to the map $[\sharp]$ (which means
$[\pi]_f,\Phi_{\pi}$ or $[p]_G$). Consider
$W_{\sharp,\infty}:=\ilim{m}W_{\sharp,m}=\cup_mW_{\sharp,m}$.

 The symbol
$\mathcal{M}_{\sharp,m}$ denotes one of
$F_f(\mathfrak{m}_{\sharp,m})$, $\mathfrak{m}_{\sharp,m}$ or
$G(\mathfrak{m}_{\sharp,m})$. Consider
$\ilim{m}\mathcal{M}_{\sharp,m}$ as the direct limit of $[\sharp]$
: it consists of sequences $(a_n)_{n\geq N}$ (for some
$N\in\mathbb{N}$) such that $a_n\in K_{\sharp,n}$ and
$a_{n+1}=[\sharp]a_n$.

We need to impose that the $K_{\sharp,m}$'s are {\bf abelian}
extensions of $K$ (this is satisfied for example by 1-dimensional
classical Lubin-Tate formal groups and rank 1 Drinfeld modules).
Then we have a limit version of the Kummer pairing:
\begin{equation}\label{eq9}
(\;,\;):\ilim{m}\mathcal{M}_{\sharp,m}\times
\plim{m}K_{\ell}^M(K_{\sharp,m})\rightarrow W_{\sharp,\infty}
\end{equation}
where $\plim{m}K_{\ell}^M(K_{\sharp,m})$ is with respect to the
Norm map. The limit pairing is well defined: by the abelian
assumption we have (Kato-Parshin's reciprocity law)
$(N_{K_{\sharp,n}/K_{\sharp,n-1}}(u'),K_{\sharp,n-1}^{ab}/K_{\sharp,n-1})=(u',K_{\sharp,n}^{ab}/K_{\sharp,n})$
acting on the roots of $[\sharp]^{n-1}(X)=a$, when $u'\in
K_{\ell}^M(K_{\sharp,n})$.

We remark that the above is often formulated without taking the
whole limit tower. That is, suppose that one wants to compute
explicitly $(a,u)_m$ and $u'\in K_{\ell}^M(K_{\sharp,m+k})$ is
given so that $N_{K_{\sharp,m+k}/K_{\sharp,m}}(u')=u$: then by a
similar argument as needed for defining (\ref{eq9}) we have
\begin{equation}\label{eq9,5}
(a,u)_m=([\sharp]a,N_{K_{\sharp,m+k}/K_{\sharp,m+1}}(u'))_{m+1}=\ldots=([\sharp]^k
a,u')_{m+k}.\end{equation}

We can also write the above limit form (\ref{eq9}) as:
\begin{equation}\label{eq10}
\plim{n}K_{\ell}^M(K_{\sharp,n})\rightarrow
Hom(\mathcal{M}_{\sharp,m},T_p(\sharp))
\end{equation} sending $u=(u_n)_n$ to $\displaystyle
a\mapsto \lim_{n\rightarrow\infty}(a,u_n)_n$ with fixed $m$. These homomorphisms are
continuous because of the continuity of the Kummer pairing: see
\cite[Lemma 15]{bl} for the case $\ell=1$.
\end{subsection}
\end{section}

\begin{section}{Explicit reciprocity law formulas \`a la Wiles' for 1-dimensional local fields}

In this section we fix $\ell=1$ and sketch some of the main ideas to
obtain explicit reciprocity laws for a 1-di\-men\-sio\-nal local
field $K$ in the context of classical Lubin-Tate formal groups and
rank 1 Drinfeld modules, introducing Coleman power series. See the
last section for different approaches to this explicit reciprocity
law through the  exponential or dual exponential map.

Let $K$ be either $M$ or $F_{\pr}$. In the Drinfeld module case
$\Phi$ is required to be of rank 1 (in order to obtain abelian
extensions) and sign-normalized. We refrain from explaining this
last technical condition (the reader is referred to \cite[\S2.1]{bl}
and the sources cited there) and just notice that when
$A=\mathbb{F}_q[T]$ as in \S 2.2 this means that $\Phi$ is the
Carlitz module, i.e. $\Phi_T(X)=TX+X^q$. Furthermore we take $\pi\in
A$ a monic polynomial.

We lighten the notation introduced in \S 2.5 by shortening
$K_{\sharp,m}$ to $K_m$.

The extensions $K_m/K$, are totally ramified and abelian: they are
generated by roots of Eisenstein polynomials and
$Gal(K_{m}/K)\cong (\mathcal{O}_K/(\pi)^n)^*$. The Tate module
$T_p(\sharp)$ is a rank 1 $\mathcal{O}_K$-module with
$\mathcal{O}_K$-action given by $\alpha\cdot
\gamma:=[\alpha]_f\gamma$ or $\Phi_{\alpha}(\gamma)$. Let
$(\varepsilon_n)_n$ be an $\mathcal{O}_K$-generator of
$T_p(\sharp)$: then $K_{\sharp,m}=K(\varepsilon_m)$, since
$\varepsilon_m$ generates $W_{\sharp,m}$, and one has
$[\sharp]\varepsilon_{n+1}=\varepsilon_n$,
$[\sharp]^{n+1}\varepsilon_n=0$ where $[\sharp]$ is respectively
$[\pi]_f$ or $\Phi_{\pi}$. Moreover the $\varepsilon_n$'s form a
norm compatible system. Denote $\cup_m K(W_{\sharp,m})$ by
$K_{\sharp,\infty}$.

In this section $(\;,\;)_m$ refers to (\ref{eq5}) or (\ref{eq6}) and
$(\;,\;)$ refers to (\ref{eq9}).

First, notice that $(\;,\;)_m$ is bilinear, additive in the first
variable and multiplicative in the second variable. In particular
$(\;,\zeta)_m=0$ for any root of unity $\zeta$.

Consider the character
$$\chi: Gal(K_{\sharp,\infty}/K)\rightarrow \mathcal{O}_K^*$$
$\sigma\mapsto \chi_{\sigma}$ defined by
$\sigma((\varepsilon_n)_n)=\chi_{\sigma}\cdot((\varepsilon_n)_n)$;
it can be thought of as a character of $Gal(K^{ab}/K)$. We remark
that for $\widehat{\mathbb{G}}_m$ this $\chi$ corresponds to the
cyclotomic character $\chi_{cycl}$. The main point is the
following: $\chi^{-1}:\mathcal{O}_K^*\rightarrow
Gal(K_{\sharp,\infty}/K)$ coincides with the inverse of the local
norm symbol map. Therefore:
\begin{equation}
\label{eq55} (\varepsilon_m,u)_m=\left\{\begin{array}{c}
([N_{K_{m}/K}u]_f^{-1}-_{F_f}1)(\varepsilon_{2m})\ \ K=M\\
\Phi_{(N_{K_{m}/K}u)^{-1}}(\varepsilon_{2m})-\varepsilon_{2m}\hspace{35pt} K=F_{\pr}.\\
\end{array}\right.\end{equation}

Inspired by (\ref{eq55}) one can ask: could we find $h$ such that
$(a_m,u_m)_m=[h(a,u)]_f(\varepsilon_m)$ for classical Lubin-Tate
formal groups or $=\Phi_{h(a,u)}(\varepsilon_m)$ for rank 1
Drinfeld modules? Observe that $h(a,u)$ modulo $\pi^{m}$ defines
the same action over $\varepsilon_m$ (because $W_{\sharp,m}$ is
isomorphic to $\mathcal{O}_K/(\pi^m)$).

In this direction one obtains the following results. From class
field theory it follows $(a,a)_n=0$. Exploiting linearity and
continuity of the pairing $(\;,\;)_m$ one obtains (assuming $v(c)$
greater than a fixed value which depends linearly of $n$),
\begin{equation}\label{eq56} (c,w)_n=(c\varepsilon_n\frac{\dlog
w}{{\rm d}\varepsilon_n},\varepsilon_n)_n
\end{equation}
 where
$$\dlog:\mathcal{O}_{\sharp,n}^*\rightarrow
\Omega^1_{\mathcal{O}_{\sharp,n}/\mathcal{O}_K}$$ is the map
$x\mapsto \frac{dx}{x}$ (recall that the module of K\"ahler
differentials $\Omega^1_{\mathcal{O}_{\sharp,n}/\mathcal{O}_K}$ is
free with generator ${\rm d}\varepsilon_n$ over
$\mathcal{O}_{\sharp,n}/\mathfrak{d}_{K_{n}/K}\,$, where
$\mathfrak{d}_{K_{n}/K}$ is the different of $K_{n}$ over $K$).
Notice that one can pick a power series $g\in\mathcal{O}_K((X))$
such that $g(\varepsilon_n)=w$ and $\dlog w/{\rm
d}\varepsilon_n=g'(\varepsilon_n)/g(\varepsilon_n)$.

Finally one proves that exists $m>n$ such that
$$(a_n,\varepsilon_n)_n=-(\varepsilon_m,1+a_m\varepsilon_m^{-1})_m$$
giving a positive answer to the question above.

This suggests the following definition of an analytic pairing:
$$[a,u]_n:=Tr_{K_{n}/K}\left(\pi^{-n}\lambda(a)\frac{\dlog u}{{\rm d}\varepsilon_n}\right)\cdot\varepsilon_n\,.$$
Here $\cdot$ is the action on the Tate module at level $n$ and
$\lambda$ is the logarithm map defined by $\lambda(a):=\lim
\frac{1}{\pi^n}[\sharp]^na$ (the limit exists for $val_K(a)$
sufficiently big).

In order to compare $(a,u)_n$ and $[a,u]_n$, Iwasawa (theorem
\ref{theo2}) imposes the condition that there exists $m$ such that
$u=N_{\Q_p(\zeta_{p^m})/\Q_p(\zeta_{p^n})}(u')\,$: then
$(a,u)_n=([p]_{\widehat{\mathbb{G}}_m}^{m-n}a,u')_m$ and it is at
level $m$ that he compares the two pairings obtaining
$([p]_{\widehat{\mathbb{G}}_m}^{m-n}a,u')_m=[[p]_{\widehat{\mathbb{G}}_m}^{m-n}a,u']_m$.

The general case follows Iwasawa's argument. Here we will state a
limit version, hence we suppose that $(u_n)\in \liminv K_{n}^*$
(limit w.r.t.\! the norm).

To express in compact form the limit of the pairings $[\;,\;]_n$ it
is convenient to introduce Coleman's power series.

\begin{teo} Let $K$, $F_f$, $\Phi$ be as above. Then
\begin{enumerate}
\item \cite{coleman} (case $K=M$) There exists a unique operator
$\mathcal{N}$ (the Coleman norm) defined by the property
$$(\mathcal{N}h)\circ f(X)=\prod_{w\in W_{F_f,0}}h(X+_{F_f}w)$$ for
any $h\in M((x))_1$ (the set of those Laurent series which are
convergent in the unit ball).
\item \cite{bl} (case $K=F_{\pr}$) There exists a unique operator
$\mathcal{N}$ such that
$$(\mathcal{N}h)\circ\Phi_{\pi}=\prod_{v\in\Phi[\pr]}h(x+v)$$
for any $h\in F_{\pr}((x))_1$.\end{enumerate} Moreover, in both
cases the evaluation map $ev:f\mapsto (f(\varepsilon_{n}))_{n}$
gives an isomorphism
$$(\mathcal{O}_K((x))^*)^{\mathcal{N}=id}\cong \lim_{\leftarrow}K_{\sharp,n}^*\,.$$
\end{teo}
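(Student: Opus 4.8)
The plan is to handle both cases uniformly, writing $[\sharp]$ for $[\pi]_f$ (resp. $\Phi_\pi$), $W_{\sharp,0}$ for the kernel $W_{F_f,0}$ (resp. $\Phi[\pr]$), and $\oplus$ for the relevant addition $+_{F_f}$ (resp. ordinary addition on $\mathbb{G}_a$). The construction of $\mathcal{N}$ is purely algebraic and rests on reading $\prod_{w\in W_{\sharp,0}}h(X\oplus w)$ as a norm down the finite extension of (Laurent) power-series rings induced by composition with $[\sharp]$; the isomorphism then comes from matching this norm with the field-theoretic norm $N_{K_{n+1}/K_n}$ evaluated at the tower $(\varepsilon_n)$.

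First I would establish existence and uniqueness of $\mathcal{N}$. Set $N(h)(X):=\prod_{w\in W_{\sharp,0}}h(X\oplus w)$. Since $W_{\sharp,0}$ is a $Gal\bigl(K(W_{\sharp,0})/K\bigr)$-stable set, the symmetric expression $N(h)$ has coefficients back in $\mathcal{O}_K$; since translation $X\mapsto X\oplus w_0$ by a fixed $w_0\in W_{\sharp,0}$ merely permutes the factors, $N(h)$ is invariant under all such translations. The decisive fact is that a Laurent series invariant under every $X\mapsto X\oplus w_0$ is precisely one of the form $g\bigl([\sharp](X)\bigr)$: equivalently, $\mathcal{O}_K((x))$ is finite free of rank $|W_{\sharp,0}|$ over its subring of series in $[\sharp](x)$, the translates $X\oplus w$ being the conjugates, so that $N(h)$ is their norm and descends through $[\sharp]$. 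This yields a unique $g=:\mathcal{N}h$ with $(\mathcal{N}h)\circ[\sharp]=N(h)$; uniqueness holds because $[\sharp]$ is not a zero divisor, and $\mathcal{N}$ visibly sends units to units.

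Next I would link $\mathcal{N}$ to the field norm. As $\varepsilon_{n+1}$ is a root of $[\sharp](T)=\varepsilon_n$, its conjugates over $K_n$ are exactly $\{\varepsilon_{n+1}\oplus w:w\in W_{\sharp,0}\}$, whence
$$N_{K_{n+1}/K_n}\bigl(h(\varepsilon_{n+1})\bigr)=N(h)(\varepsilon_{n+1})=(\mathcal{N}h)\bigl([\sharp]\varepsilon_{n+1}\bigr)=(\mathcal{N}h)(\varepsilon_n).$$
Thus if $\mathcal{N}h=h$ the sequence $ev(h)=(h(\varepsilon_n))_n$ is norm-compatible, so $ev$ carries $(\mathcal{O}_K((x))^*)^{\mathcal{N}=id}$ into $\lim_{\leftarrow}K_{\sharp,n}^*$. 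Injectivity is the mild part: if $h(\varepsilon_n)=1$ for all $n$, then after clearing the power of $x$ a convergent series vanishes at the $\varepsilon_n$, which accumulate at $0$, and a Weierstrass-preparation / identity-theorem argument on $M((x))_1$ (resp. $F_{\pr}((x))_1$) forces $h=1$.

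The \textbf{main obstacle} is surjectivity. Given a norm-compatible $(u_n)\in\lim_{\leftarrow}K_{\sharp,n}^*$, choose for each $n$ a lifting $h_n\in\mathcal{O}_K((x))^*$ with $h_n(\varepsilon_n)=u_n$, possible because $\varepsilon_n$ is a uniformizer of the totally ramified extension $K_n/K$ so that every element of $K_n^*$ is $h_n(\varepsilon_n)$ for some invertible Laurent series $h_n$. The displayed identity, together with norm-compatibility of $(u_n)$, gives $(\mathcal{N}^{j}h_{n+j})(\varepsilon_n)=u_n$ for every $j$. The crux is to show that the sequence $(\mathcal{N}^{j}h_{n+j})_j$ converges: one proves that $\mathcal{N}$ acts as a contraction, improving congruences modulo increasing powers of $\pi$, so that successive iterates agree to higher and higher order and the limit $h$ exists and is independent of the chosen liftings. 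By construction $\mathcal{N}h=h$ and $ev(h)=(u_n)_n$, which closes the isomorphism. I expect this convergence estimate for the iterated Coleman operator — the precise control of how $\mathcal{N}$ sharpens congruences — to be the hard technical point; the case $K=F_{\pr}$ runs identically with $\Phi_\pi$ and ordinary addition replacing $[\pi]_f$ and $+_{F_f}$, as carried out in \cite{bl}, while the case $K=M$ is \cite{coleman}.
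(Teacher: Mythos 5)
The paper itself gives no proof of this theorem: being a survey, it states the result with citations to Coleman and to Bars--Longhi, so there is no internal argument to compare against. Your proposal correctly reconstructs the standard proof architecture of those sources: Weierstrass division to show that a (Laurent) series invariant under translation by the $\pi$-torsion factors uniquely through $[\sharp]$, giving existence and uniqueness of $\mathcal{N}$; the identification of $\mathcal{N}$ with the field norm along the tower $(\varepsilon_n)$; injectivity from the identity theorem; and surjectivity by iterating $\mathcal{N}$ on arbitrary liftings $h_n$, which hinges on the congruence-improvement (contraction) lemma for $\mathcal{N}$ together with compactness of $x^{v}\mathcal{O}_K[[x]]^*$ --- you rightly flag this as the technical heart, though it is asserted rather than proved. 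The one imprecision worth fixing is at the bottom of the tower: $[K_{\sharp,1}:K]$ equals $|k_K|-1$ while the $\pi$-torsion has $|k_K|$ elements, so the identity $N_{K_{\sharp,n+1}/K_{\sharp,n}}\bigl(h(\varepsilon_{n+1})\bigr)=(\mathcal{N}h)(\varepsilon_n)$ holds only for $n\geq 1$ and the inverse limit must be interpreted accordingly (as it is in \cite{coleman}, \cite{deshalit} and \cite{bl}).
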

Denote by $Col_u$ the power series associated to
$u\in\displaystyle\lim_{\leftarrow}K_{\sharp,n}^*$. Then we define
the limit form of the analytic pairing by
$$[\;,\;]:\ilim{n}\mathcal{M}_{\sharp,m}\times \plim{n}K_{\sharp,m}^*\rightarrow W_{\sharp,\infty}$$
$$[a,u]:=Tr_{K_{\sharp,n}/K}(\pi^{-n}\lambda(a_n)\dlog Col_u(\varepsilon_n))\cdot\varepsilon_n\,.$$

Notice that $\dlog Col_u(\varepsilon_n)=\frac{\dlog
u_n}{d\varepsilon_n}$.

Finally one observes that $[a,u]$ has similar properties to
$(a,u)$; in particular
$[a,u]=[a_n\varepsilon_n\dlog Col_u(\varepsilon_n),\varepsilon_n]_n$
(compare with (\ref{eq56})) and to prove the reciprocity law one
is reduced to show $[\;,\varepsilon_n]_n=(\;,\varepsilon_n)_n$ for
$n$ big enough.

\begin{teo} \label{theor1}
Under the above notation, we have
\begin{enumerate} \item \cite{wiles} $(\;,\;)=[\;,\;]$ for
classical Lubin-Tate formal groups $F_f$. \item \cite{bl}
$(\;,\;)=[\;,\;]$ for $F_{\pr}$ and $\Phi$.
\end{enumerate}
\end{teo}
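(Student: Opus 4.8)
The plan is to prove the identity $(\;,\;)=[\;,\;]$ of pairings on the limit tower by reducing, via the norm-compatibility built into the limit forms, to a comparison of the two pairings against the fixed element $\varepsilon_n$ at a sufficiently high level $n$. As the excerpt already records, both pairings enjoy the reduction $[a,u]=[a_n\varepsilon_n\dlog Col_u(\varepsilon_n),\varepsilon_n]_n$ and $(c,w)_n=(c\varepsilon_n\frac{\dlog w}{{\rm d}\varepsilon_n},\varepsilon_n)_n$, so once we know $\dlog Col_u(\varepsilon_n)=\frac{\dlog u_n}{d\varepsilon_n}$ (which holds by the defining property of the Coleman series), both pairings are expressed through their values on $\varepsilon_n$. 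Hence the theorem follows from the single assertion $[\;,\varepsilon_n]_n=(\;,\varepsilon_n)_n$ for $n$ large, and the entire content is concentrated there.

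First I would set up the comparison at a controlled finite level. Following Iwasawa's strategy mentioned in the excerpt, given $a$ I pass to a high level $m>n$ using the relations $(a,u)_n=([\sharp]^{m-n}a,u')_m$ and the analogous shift for $[\;,\;]$, so that $a$ becomes highly divisible by $[\pi]$ and the logarithm $\lambda$ and the Coleman machinery converge comfortably. I would then invoke the two explicit identities already isolated: the class-field-theoretic input $(a,a)_n=0$ together with the key manipulation
$$
(a_n,\varepsilon_n)_n=-(\varepsilon_m,1+a_m\varepsilon_m^{-1})_m,
$$
which converts a pairing against $\varepsilon_n$ into one in which $\varepsilon_m$ sits in the \emph{first} slot. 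On that side one can apply the character computation (\ref{eq55}), which expresses $(\varepsilon_m,u)_m$ through the norm symbol and hence through the reciprocity character $\chi$, giving a completely explicit Galois-theoretic description of the arithmetic pairing.

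Next I would compute the analytic side $[\;,\varepsilon_n]_n$ of the same expression. Here the trace form $Tr_{K_{\sharp,n}/K}(\pi^{-n}\lambda(a)\,\dlog Col_u(\varepsilon_n))\cdot\varepsilon_n$ must be evaluated on the element $1+a_m\varepsilon_m^{-1}$, whose Coleman series is essentially $1+X\cdot(\text{unit})$; the logarithmic derivative then produces, after the trace, exactly the residue- or trace-type expression that matches the Artin--Hasse formula. The technical heart is showing that the reciprocity character $\chi_\sigma$ produced on the arithmetic side agrees modulo $\pi^m$ with the $\mathcal{O}_K$-element read off from the trace on the analytic side; this is precisely the Artin--Hasse--Iwasawa reciprocity computation for $\widehat{\mathbb{G}}_m$, transported to $F_f$ (resp. $\Phi$) through the embedding $\mathcal{O}_M\to End(F_f)$ (resp. $A_{\pr}\to F_{\pr}\{\{\tau\}\}$).

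The main obstacle, and the step that carries the real weight, is this last matching: controlling the trace $Tr_{K_{\sharp,n}/K}(\pi^{-n}\lambda(a)\,\dlog Col_{\varepsilon}(\varepsilon_n))$ and identifying it with the value of $\chi$ on the norm symbol. This requires (i) sharp estimates on the different $\mathfrak{d}_{K_n/K}$ to guarantee that the relevant quantities are $\pi^m$-integral and that raising $n$ kills the error terms, and (ii) the duality between the $\dlog$ pairing into $\Omega^1_{\mathcal{O}_{\sharp,n}/\mathcal{O}_K}$ and the trace, so that the logarithmic derivative of the Coleman series pairs correctly with $\lambda(a)$. For the Drinfeld case (2) the extra subtlety is that one works in characteristic $p$ with the skew power series ring $F_{\pr}\{\{\tau\}\}$ and the sign-normalized Carlitz-type module; once the Coleman operator $\mathcal{N}$ of part (2) of the previous theorem is in hand, the formal structure mirrors the Lubin-Tate case, and I would expect the same trace computation to go through, with the Frobenius $\tau$ playing the role of the relative Frobenius in Coleman's original argument.
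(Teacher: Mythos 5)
Your proposal follows essentially the same route the paper outlines: reduction via norm compatibility and the identities $(a,a)_n=0$, (\ref{eq55}), (\ref{eq56}) and $(a_n,\varepsilon_n)_n=-(\varepsilon_m,1+a_m\varepsilon_m^{-1})_m$ to the single comparison $[\;,\varepsilon_n]_n=(\;,\varepsilon_n)_n$ at high level, settled by the Iwasawa--Wiles valuation estimates. Like the survey itself, you leave the precise valuation calculation to the cited sources, but the strategy and the key intermediate identities match the paper's sketch.
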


In \cite{wiles} Wiles proved this result for classical Lubin-Tate
formal groups without using Coleman power series: he takes $m$ big
enough in order to compare $(\;,\;)_m$ and $[\;,\;]_m$, like Iwasawa
had done for $\widehat{\mathbb{G}}_m$. This strategy requires a very
precise valuation calculation. For a detailed explanation one can
look also at \cite[\S 8,9]{lang}. The above limit version with
Coleman power series for classical Lubin-Tate formal groups can be
found in \cite[I,\S 4]{deshalit}.

For the Carlitz module, Angl\`es in \cite{angles} obtained Wiles'
version of theorem \ref{theor1}, i.e. without Coleman power
series.

\end{section}

\begin{section}{Explicit reciprocity laws and higher $K$-theory groups}
The starting point for the results in \S3 is explicit local
class field theory applied to $\varepsilon$ and the $\dlog$
homomorphism. In \cite{kato1} and \cite{kato} Kato reinterpreted
Wiles' reciprocity law as an equality between two maps obtained by
composition of natural maps from cohomology groups and gave
generalized reciprocity laws in higher $K$-theory. Here we
introduce his approach to the classical Hilbert symbol for
$\ell$-dimensional local fields, $\ell>1$, and reformulate parts
of it for rank 1 Drinfeld modules.

\begin{subsection}{Exponential map in the Hilbert symbol}
In this paragraph we assume $char(K)=0$ and $char(k_K)=p>0$. Take:
$$exp_{\delta}:\mathcal{O}_K\rightarrow\mathcal{O}_K^*$$ given by
$exp_{\delta}(a)=exp(\delta a)$ if
$val_K(\delta)>\frac{val_K(p)}{(p-1)}$ where
$exp(X)=\sum_{m\geq0}\frac{X^m}{m!}$ is the exponential. Assuming
$val_K(\delta)\geq \frac{2val_K(p)}{(p-1)}$, the map $exp_{\delta}$
extends to a group morphism \cite{kurihara}:
$$exp_{\delta,r}:\Omega_{\mathcal{O}_K}^{r-1}\rightarrow
\widehat{K_r^M(K)}$$
$$a \dlog b_1\wedge\cdots\wedge \dlog b_{r-1}\mapsto \{exp(\delta
a),b_1,\ldots,b_{r-1}\},$$ where
$\widehat{K_r^M(K)}:=\plim{n}K_r^M(K)/p^nK_r^M(K)$,
$\Omega_R^r:=\bigwedge_R^r\Omega^1_R$, $\Omega^1_R$ are the K\"ahler
differentials and $r$ is a strict positive integer. The function
$exp_{\delta,r}$ factors through
$\bigwedge_{\mathcal{O}_K}^{r-1}\hat{\Omega}^1_{\mathcal{O}_K}$
where $\hat{\Omega}^1_{\mathcal{O}_K}$ is the $p$-adic completion of
$\Omega^1_{\mathcal{O}_K}$.

Sen \cite{sen} generalizes theorem \ref{theo2}:
\begin{teo}\label{theosen} Assume $\zeta_{p^n}\in M$. Fix
$\pi$ a uniformizer of $M$, $\alpha_1\in\mathcal{O}_M-\{0\}$, and
$\alpha\in K$ with $val_K(\alpha-1)\geq
\frac{2val_K(p)}{p-1}+val_K(\alpha_1)$. Then
$$(\alpha,\alpha_1)_n=\zeta_{p^n}^c,\text{ with }
c=\frac{-1}{p^n}Tr_{M/\Q_p}(\frac{\zeta_{p^m}}{h'(\pi)}\frac{g'(\pi)}{\alpha_1}\log\alpha)$$
where $g,h\in\mathcal{O}_{M_0}[T]$ are such that $g(\pi)=\alpha_1$
and $h(\pi)=\zeta_{p^n}$, and the pairing is (\ref{eq2}).
\end{teo}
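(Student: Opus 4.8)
The plan is to deduce Theorem \ref{theosen} from the Iwasawa--Wiles reciprocity law (Theorem \ref{theor1}(1)) applied to the multiplicative formal group $\widehat{\mathbb{G}}_m$, whose $p^n$-torsion generates the cyclotomic tower. First I would translate the classical Hilbert symbol into the formal-group Kummer pairing: writing $\alpha=1+\delta_0$ and $\varepsilon_n=\zeta_{p^n}-1$, the identity $(\alpha,\alpha_1)_n=\zeta_{p^n}^c$ is, by (\ref{eq4}), the same as
$$(\delta_0,\alpha_1)_n=[c]_{\widehat{\mathbb{G}}_m}(\varepsilon_n)=\zeta_{p^n}^c-1,$$
so that determining the exponent $c\in\Z/p^n\Z$ is exactly the evaluation of the $\widehat{\mathbb{G}}_m$-pairing (\ref{eq5}). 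After lifting $\alpha_1$ to a norm-compatible system (using (\ref{eq9,5}) to climb the tower to a level where the comparison is valid), Theorem \ref{theor1}(1) identifies the arithmetic pairing with the analytic one, giving $(\delta_0,\alpha_1)_n=[\delta_0,\alpha_1]_n$ for $n$ large, and reducing the whole statement to the explicit trace
$$[\delta_0,\alpha_1]_n=Tr\Big(\pi^{-n}\,\lambda(\delta_0)\,\tfrac{\dlog\alpha_1}{{\rm d}\varepsilon_n}\Big)\cdot\varepsilon_n.$$

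Next I would compute the three ingredients of this trace. For $\widehat{\mathbb{G}}_m$ the logarithm $\lambda$ is the $p$-adic logarithm, so $\lambda(\delta_0)=\log\alpha$, and the base uniformizer is $p$, producing the factor $p^{-n}$. The logarithmic derivative $\dlog\alpha_1/{\rm d}\varepsilon_n$ I would evaluate by the chain rule in the uniformizer $\pi$ of $M$: from $\alpha_1=g(\pi)$ and $\zeta_{p^n}=h(\pi)$ one reads off ${\rm d}\varepsilon_n=h'(\pi)\,{\rm d}\pi$ and $\dlog\alpha_1=(g'(\pi)/\alpha_1)\,{\rm d}\pi$, hence
$$\frac{\dlog\alpha_1}{{\rm d}\varepsilon_n}=\frac{g'(\pi)}{\alpha_1\,h'(\pi)}.$$
Using the reduction (\ref{eq56}) to move the second argument onto $\varepsilon_n$, where the sign and the extra factor $\zeta_{p^n}$ enter through the relation $(a_n,\varepsilon_n)_n=-(\varepsilon_m,1+a_m\varepsilon_m^{-1})_m$ together with the explicit action (\ref{eq55}), one then assembles precisely $c=-p^{-n}\,Tr_{M/\Q_p}\big(\tfrac{\zeta_{p^n}}{h'(\pi)}\tfrac{g'(\pi)}{\alpha_1}\log\alpha\big)$, the asserted formula.

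The hard part will be the passage from $\Q_p(\zeta_{p^n})$ — where this is exactly Iwasawa's theorem (Theorem \ref{theo2}), recovered as above — to an arbitrary $M$ containing $\zeta_{p^n}$, since $M$ may be ramified over $\Q_p(\zeta_{p^n})$ outside the cyclotomic tower. The difficulty is structural: for general $M$ the group $\widehat{\mathbb{G}}_m$ is not Lubin--Tate over $\mathcal{O}_M$ (its $[p]$ reduces to $X^p$ and $p$ need not be a uniformizer of $M$), so Theorem \ref{theor1} and the Coleman machinery are not available with base $M$. The uniformizer $\pi$ and the polynomials $g,h\in\mathcal{O}_{M_0}[T]$ are precisely the device for coping with this: expressing every element as a series in $\pi$ over the unramified base $M_0$ lets one transport the differential calculus of the cyclotomic tower to $M$ by the chain rule, and it is this transport that produces the full $Tr_{M/\Q_p}$ in place of $Tr_{\Q_p(\zeta_{p^n})/\Q_p}$; making it rigorous is what forces the ramification-theoretic input of Sen's original argument. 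The sharp hypothesis $val_K(\alpha-1)\ge\frac{2val_K(p)}{p-1}+val_K(\alpha_1)$ is exactly what guarantees that $\log\alpha$ converges with $val(\log\alpha)=val(\alpha-1)$ and that the pole of $g'(\pi)/\alpha_1$ at a non-unit $\alpha_1$ is absorbed, so that the arithmetic and analytic pairings agree on the nose and not merely up to a lower-order term — the ``very precise valuation calculation'' alluded to in the text. An alternative worth keeping in mind, better adapted to general $M$, is to use that $\alpha=\exp(\log\alpha)$ lies in the image of an exponential map $exp_{\delta}$ as in \S4.1 and to read off $(\alpha,\alpha_1)_n$ from the reciprocity law for $exp_{\delta,1}$, which is formulated directly over $M$; but I expect the same valuation bookkeeping to resurface there as the convergence condition on $\delta$.
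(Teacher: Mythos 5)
Your instinct that the statement should reduce to the Artin--Hasse--Iwasawa formula over the cyclotomic field, with $g,h$ and traces effecting the transport to $M$, matches the shape of the argument the paper records --- and the ``alternative'' you mention in your last sentence is in fact the route the paper takes. But as a proof the proposal has a gap exactly at its central step. Theorem \ref{theor1}(1) computes the Kummer pairing only over the fields $K_m=M(W_{F_f,m})$ of the Lubin--Tate tower of the \emph{base} of the formal group; for $\widehat{\mathbb{G}}_m$ that base is $\Q_p$ (or $M_0$) and the tower is cyclotomic, whereas Sen's $M$ is an arbitrary finite extension containing $\zeta_{p^n}$ and in general lies in no such tower. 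So the identity $(\delta_0,\alpha_1)_n=[\delta_0,\alpha_1]_n$ you want to invoke is simply not available over $M$, as you yourself observe; and the ``transport by the chain rule'' meant to repair this is deferred to ``Sen's original argument,'' i.e.\ assumed rather than proved. That transport is the whole content of the theorem.

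The paper's reduction is concrete and of a different kind: one writes $\{\alpha,\alpha_1\}=exp_{\eta,2}\bigl(\eta^{-1}\log\alpha\,\dlog\alpha_1\bigr)$ using Kurihara's exponential homomorphism $exp_{\eta,2}:\hat{\Omega}^1_{\mathcal{O}_M}\rightarrow\widehat{K_2^M}(M)$ (note it is $exp_{\eta,2}$ into $K_2^M$, not $exp_{\delta,1}$ into $K_1$, since the Hilbert symbol for $\ell=1$ lives on $K_2^M$), then uses the commutativity of $exp_{\eta,2}$ with $Tr_{M/M_0(\zeta_{p^n})}$ on differentials and $N_{M/M_0(\zeta_{p^n})}$ on $K_2^M$, the norm compatibility $h_{M_0(\zeta_{p^n})}\circ N_{M/M_0(\zeta_{p^n})}=h_M$ of the Hilbert symbol, and the identity $Tr_{M/M_0(\zeta_{p^n})}(a\,d\pi)=Tr_{M/M_0(\zeta_{p^n})}(a/h'(\pi))\,d\zeta_{p^n}$. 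This is where $h'(\pi)$ and the full trace $Tr_{M/\Q_p}$ enter --- not from a pointwise quotient $\dlog\alpha_1/d\varepsilon_n$, which is only well defined modulo the different because $h$ is merely \emph{some} polynomial with $h(\pi)=\zeta_{p^n}$. Only after descending to $M_0(\zeta_{p^n})$ does one apply theorem \ref{theo2}. Note finally that the hypothesis $val_K(\alpha-1)\geq\frac{2val_K(p)}{p-1}+val_K(\alpha_1)$ is what places $\log\alpha\cdot\dlog\alpha_1=\log\alpha\,\frac{g'(\pi)}{\alpha_1}\,d\pi$ in the domain of convergence of $exp_{\eta,2}$ when $\alpha_1$ is not a unit; it is a condition for the exponential construction itself, not only the ``valuation bookkeeping'' of a Wiles-type comparison.
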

From now on take $val_M(\eta)=\frac{2val_M(p)}{p-1}$, $\eta\in
M_0(\zeta_{p^n})$. The proof of theorem \ref{theosen} reduces to
theorem \ref{theo2} because
$Tr_{M/M_0(\zeta_{p^m})}(ad\pi)=Tr_{M/M_0(\zeta_{p^m})}(\frac{a}{h'(\pi)})d\zeta_{p^m}$
and the commutativity of the following diagram
(\cite[p.217]{kurihara}):
\[ \begin{CD}
\hat{\Omega}^1_{\mathcal{O}_{M}} @>exp_{\eta,2}>>\widehat{K_2^M}(M) @>h_M>> \Z/p^n(1)\\
 @VV{Tr_{M/M_0(\zeta_{p^m})}}V     @VV{N_{M/M_0(\zeta_{p^m})}}V @VV{id}V\\
\hat{\Omega}^1_{\mathcal{O}_{M_0(\zeta_{p^n})}} @>exp_{\eta,2}>> \widehat K_2^M(M_0(\zeta_{p^n})) @>h_{M_0(\zeta_{p^n})}>> \Z/p^n(1)\\
\end{CD} \]\\
where $h_*$ is the Hilbert symbol $\{a,b\}\mapsto(a,b)_n$.

We rewrite Sen's theorem as a commutative diagram. Take
$\gamma\in\mathcal{O}_M$ with $val_M(\gamma)=\frac{val_M(p)}{p-1}$.
Fontaine proved that we have the following isomorphism
$$\Psi_M:\hat{\Omega}^1_{\mathcal{O}_{M}}/p^n\gamma^{-1}\rightarrow
\gamma^{-1}\mathfrak{d}^{-1}_{M/M_0}/\gamma^{-1}\mathfrak{d}^{-1}_{M/M_0(\zeta_{p^n})}(1)$$
induced from the map $a\dlog\zeta_{p^m}\mapsto
 ap^{-m}\otimes(\zeta_{p^m})_{m>0}$ where $\mathfrak{d}_{L_1/L_2}$ is the different of the finite
 extension of fields $L_1/L_2$. Commutativity of the
 diagram
\[ \begin{CD}
\hat{\Omega}^1_{\mathcal{O}_{M}}/p^n\gamma^{-1} @>\Psi_M>>\gamma^{-1}\mathfrak{d}^{-1}_{M/M_0}/
\gamma^{-1}\mathfrak{d}^{-1}_{M/M_0(\zeta_{p^n})}(1)\\
 @V{-exp_{\eta}}VV     @V{Tr_{\eta}\otimes id}VV\\
  K_2^M(M)/p^n @>h_M>> \Z/p^n(1)\\
 \end{CD} \]
(where $Tr_{\eta}$
 is the map induced by $x\mapsto Tr_{M/\Q_p}(\eta x)$) is Sen's theorem \cite[\S4]{kurihara}.

Now we consider $\ell>1$. Assume $\zeta_{p^n}\in K$. Consider
$K_0$ with $K/K_0$ a finite and totally ramified extension such that
$p$ is a prime element of $\mathcal{O}_{K_0}$. Recall
$k_{K}=\F_{q}((t_1))\ldots((t_{\ell-1}))$. Take $M$ such that
$M=M_0$, $M\subseteq K_0$ and the residue field is $\F_q$: then
$K_0=M\{\{T_1\}\}\cdots\{\{T_{\ell-1}\}\}$. Kurihara extends Sen's
result to higher Milnor $K$-theory (theorem \ref{theokurih}) by
means of the commutativity of the diagram:
\[ \begin{CD}
\hat{\Omega}^{\ell}_{\mathcal{O}_{K}} @>exp_{\eta,\ell+1}>>\widehat{K_{\ell+1}^M}(K) @>h_K>> \Z/p^n(1)=\boldsymbol{\mu}_{p^n}\\
 @VV{Tr_{K/K_0(\zeta_{p^m})}}V     @VV{N_{K/K_0(\zeta_{p^m})}}V @VV{id}V\\
\hat{\Omega}^{\ell}_{\mathcal{O}_{K_0(\zeta_{p^n})}} @>exp_{\eta,\ell+1}>> \widehat{K_{\ell+1}^M}(K_0(\zeta_{p^n})) @>h_{K_0(\zeta_{p^n})}>> \Z/p^n(1)\\
@VV{Res}V     @VV{\underline{Res}}V @VV{id}V\\
\hat{\Omega}^1_{\mathcal{O}_{M(\zeta_{p^n})}} @>exp_{\eta,2}>> \widehat{K_2^M}(M(\zeta_{p^n})) @>h_{M(\zeta_{p^n})}>> \Z/p^n(1)\\
\end{CD} \]\\
where
$Res:\hat{\Omega}^{\ell}_{\mathcal{O}_{K_0(\zeta_{p^n})}}\rightarrow
\hat{\Omega}^1_{\mathcal{O}_{M}}$ is defined by $Res(w \dlog
T_1\wedge\cdots\wedge\dlog T_{\ell-1})=w$ where
$w\in\hat{\Omega}^1_{\mathcal{O}_{M(\zeta_{p^n})}}$,
$\underline{Res}$ is the Kato's residue homomorphism in Milnor
$K$-groups and $h_L$ is the Hilbert symbol
$\{a_1,\ldots,a_{\ell+1}\}\mapsto
(a_1,\{a_2,\ldots,a_{\ell+1}\})_n$.
\begin{teo}[\cite{kurihara}]\label{theokurih}
Take $\alpha_1,\ldots,\alpha_{\ell}\in\mathcal{O}_K^*$ and
$\alpha\in\mathcal{O}_K^*$ with
$val_K(\alpha-1)\geq\frac{2val_K(p)}{p-1}$. Take
$f_i(T,T_2,\ldots,T_{\ell})\in\mathcal{O}_{K_0}[T]$ such that
$f_i(\pi,T_2,\ldots,T_{\ell})=\alpha_i$, and
$h(T)\in\mathcal{O}_{K_0}[T]$ with $h(\pi)=\zeta_{p^n}$ where $\pi$
is a fixed uniformizer of $K$. Then
$(\alpha,\{\alpha_1,\ldots,\alpha_{\ell}\})=\zeta_{p^n}^c$ where
$$c=-\frac{1}{p^n}\mathcal T\left(\log \alpha\frac{T_2\cdots
T_{\ell}}{\alpha_1\cdots\alpha_{\ell}}\frac{\zeta_{p^n}}{h'(\pi)}\left[
\det(\frac{\partial f_i}{\partial T_j})_{1\leq
i,j\leq\ell}\right]_{|T_1=\pi}\right)\,.$$ Here $\mathcal
T:=Tr_{M(\zeta_{p^n})/\Q_p}\circ
c_{K_0(\zeta_{p^n})/M(\zeta_{p^n})}\circ Tr_{K/K_0(\zeta_{p^n})}$,
with $c_{L\{\{T\}\}/L}(\sum a_iT^i):=a_0$ and
$c_{L\{\{T_2\}\}\{\{T_3\}\}\cdots\{\{T_k\}\}/L}$ defined recursively
as composition of the maps
$c_{L\{\{T_2\}\}\{\{T_3\}\}\cdots\{\{T_{i+1}\}\}/L\{\{T_2\}\}\{\{T_3\}\}\cdots\{\{T_{i}\}\}}$.
\end{teo}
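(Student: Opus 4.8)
The plan is to deduce the formula from the three-row commutative diagram displayed just before the statement; its commutativity is the essential input (due to \cite{kurihara}), and once granted the theorem follows by a diagram chase feeding into Sen's formula (theorem~\ref{theosen}), which reduces the $\ell$-dimensional symbol to a one-dimensional computation. I would start by realizing the symbol analytically. Since $val_K(\alpha-1)\geq\frac{2val_K(p)}{p-1}$ the logarithm $\log\alpha$ converges and $\alpha=exp(\log\alpha)$; as $val_M(\eta)=\frac{2val_M(p)}{p-1}$, the element $a:=\eta^{-1}\log\alpha$ lies in $\mathcal{O}_K$ and satisfies $exp(\eta a)=\alpha$. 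By the definition of $exp_{\eta,\ell+1}$ one then has $\{\alpha,\alpha_1,\ldots,\alpha_\ell\}=exp_{\eta,\ell+1}(\omega)$ with $\omega=\eta^{-1}\log\alpha\,\dlog\alpha_1\wedge\cdots\wedge\dlog\alpha_\ell\in\hat\Omega^\ell_{\mathcal{O}_K}$, so that $(\alpha,\{\alpha_1,\ldots,\alpha_\ell\})=h_K(exp_{\eta,\ell+1}(\omega))$.

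Next I would put $\omega$ in the standard coordinates $T_1=\pi,T_2,\ldots,T_\ell$. Writing $\alpha_i=f_i(\pi,T_2,\ldots,T_\ell)$ and expanding $d\alpha_i=\sum_j(\partial f_i/\partial T_j)|_{T_1=\pi}\,dT_j$, the wedge $\dlog\alpha_1\wedge\cdots\wedge\dlog\alpha_\ell$ produces the Jacobian determinant; converting $dT_j=T_j\dlog T_j$ in the horizontal directions $j\geq2$ and using $\zeta_{p^n}=h(\pi)$, i.e.\ $d\pi=\frac{\zeta_{p^n}}{h'(\pi)}\dlog\zeta_{p^n}$, in the vertical direction, this becomes
$$\omega=\eta^{-1}\log\alpha\,\frac{T_2\cdots T_\ell}{\alpha_1\cdots\alpha_\ell}\Big[\det\big(\tfrac{\partial f_i}{\partial T_j}\big)\Big]_{|T_1=\pi}\frac{\zeta_{p^n}}{h'(\pi)}\,\dlog\zeta_{p^n}\wedge\dlog T_2\wedge\cdots\wedge\dlog T_\ell.$$
Crucially the differential part is now pulled back from $K_0(\zeta_{p^n})$, so that the trace on K\"ahler differentials $Tr_{K/K_0(\zeta_{p^n})}$ acts only on the scalar coefficient and $Res$ simply extracts the factor of $\dlog T_2\wedge\cdots\wedge\dlog T_\ell$, i.e.\ applies $c_{K_0(\zeta_{p^n})/M(\zeta_{p^n})}$ to that scalar.

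To finish I would chase $exp_{\eta,\ell+1}(\omega)$ down the right column using commutativity of the two right squares, $h_K=h_{K_0(\zeta_{p^n})}\circ N_{K/K_0(\zeta_{p^n})}$ and $h_{K_0(\zeta_{p^n})}=h_{M(\zeta_{p^n})}\circ\underline{Res}$, which gives $h_K(exp_{\eta,\ell+1}(\omega))=h_{M(\zeta_{p^n})}(exp_{\eta,2}(Res\,Tr_{K/K_0(\zeta_{p^n})}\,\omega))$. The bottom term is a symbol on the one-dimensional field $M(\zeta_{p^n})$, and Sen's theorem evaluates it through Fontaine's isomorphism $\Psi$ recorded above, with its sign $-exp_\eta$ and twist $Tr_\eta=Tr_{M(\zeta_{p^n})/\Q_p}(\eta\,\cdot\,)$. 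The $\eta$ of $Tr_\eta$ cancels the $\eta^{-1}$ carried by $\omega$, and combining the scalar of the previous step with the descent $c_{K_0(\zeta_{p^n})/M(\zeta_{p^n})}\circ Tr_{K/K_0(\zeta_{p^n})}$ and the final $Tr_{M(\zeta_{p^n})/\Q_p}$ yields exactly $c=-\frac{1}{p^n}\mathcal T(\cdots)$ with $\mathcal T$ the stated composite.

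The hard part is not this diagram chase but the commutativity of the two \emph{left} squares: that $exp_{\eta,\ell+1}$ intertwines the trace on K\"ahler differentials with the norm on Milnor $K$-theory, and that it is compatible with Kato's residue $\underline{Res}$. These rest on a projection formula for $exp_\eta$ and, decisively, on the bound $val(\eta)\geq\frac{2val(p)}{p-1}$, which is precisely what makes $exp_{\eta,r}$ a well-defined homomorphism on $\Omega^{r-1}$. The attendant convergence estimates, and the check that the differential trace respects the normalization $d\pi=\frac{\zeta_{p^n}}{h'(\pi)}\dlog\zeta_{p^n}$ used above, are the technical core, and I would carry them out following \cite{kurihara}.
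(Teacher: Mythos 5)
Your proposal is correct and follows essentially the same route the paper indicates: realize $\{\alpha,\alpha_1,\dots,\alpha_\ell\}$ as $exp_{\eta,\ell+1}$ of an explicit differential form (whose coordinate expression produces the Jacobian determinant and the $\zeta_{p^n}/h'(\pi)$ factor), then chase the displayed three-row diagram down to the one-dimensional case and conclude by Sen's theorem in its $\Psi_M$/$Tr_\eta$ formulation, with the genuinely hard content --- commutativity of the $exp$-columns with trace, norm and (Kato's) residue --- deferred to \cite{kurihara} exactly as the survey does. No gaps beyond what the paper itself leaves to the cited reference.
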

\begin{obs2}{\em
Benois in \cite{benois} extends theorem \ref{theosen} to formal
groups.}
\end{obs2}
\end{subsection}

\begin{subsection}{Kato's generalized explicit reciprocity laws}
Kato generalizes Wiles' reciprocity law for an unequal
characteristic local field $K$ giving a natural interpretation in
the context of cohomology groups. Here we introduce two
generalizations.
\begin{subsubsection}{Local approach}
Let us first rewrite (\ref{eq10}) in the context of Wiles'
reciprocity law: let $\pi$ be a fixed uniformizer of $M$ , $F_f$ a
formal Lubin-Tate group and $\varepsilon$ a fixed generator for
the Tate module. For simplicity we assume that $p$ is prime in
$K$. Recall \cite[Remark 4.1.3]{kato} that a Lubin-Tate formal
group $G$ over $\mathcal{O}_M$ has the following characterization
in $p$-divisible groups: $dim(G)=1$, and the canonical map
$End(G)\rightarrow End_{\mathcal{O}_M}(Lie G)\cong \mathcal{O}_M$
is an isomorphism, where $Lie(G)$ is the tangent of $G$ at the
origin and $coLie(G)$ is
$Hom_{\mathcal{O}_M}(Lie(G),\mathcal{O}_M)$. Put
$G':=G\times_{\mathcal{O}_M} \mathcal{O}_{K_{F_f,m}}$; then
$Lie(G')\otimes_{\mathcal{O}_M}\Q \cong K_{F_f,m}$ and
$coLie(G')\otimes_{\mathcal{O}_M}\Q \cong K_{F_f,m}$. Consider the
map $\varrho_{\varepsilon}:\plim{n}K_{F_f,n}^*\rightarrow
K_{F_f,m}$ given by the composition
\[\begin{CD}\plim{n}K_{F_f,n}^*
 @>(\;,\;)(\ref{eq10})>> Hom_{\mathcal{O}_M,cont}(F_f(\mathfrak{m}_{f,m}),\mathcal{O}_M) @>\circ exp>> Hom_{\mathcal
{O}_M}(Lie(G'),M)\\
@>Tr>> K_{F_f,m}\cong\Q\otimes_{\mathcal{O}_M}coLie(G')\\
\end{CD} \]\\
where the last isomorphism is Kato's trace pairing
\cite[II,\S2]{kato1} and $exp$ is the exponential map.

Wiles' reciprocity law affirms that $\varrho_{\varepsilon}(a)$ has
an expression in terms of $\dlog$, which can be reformulated by
defining a natural map $\delta_{\varepsilon}$. Here we define
$\varrho_{\varepsilon}$ and $\delta_{\varepsilon}$ in Kato's
generality \cite[\S6]{kato}: this essentially includes Lubin-Tate
formal groups and the generalized Wiles' reciprocity law obtained
in \S3 (\cite[6.1.10]{kato}).

Let $K$ be an $\ell$-dimensional local field with $char(K)=0$ and
$char(k_K)=p>0$. Take $G$ a $p$-divisible group over
$\mathcal{O}_K$ with $dim(G)=1$. Suppose $\Lambda\hookrightarrow
End(G)$ with $\Lambda$ an integral domain over $\Z_p$ which is
free of finite rank as $\Z_p$-module. Suppose $T_pG$ is a free
$\Lambda$-module of rank 1 and fix a generator $\varepsilon$. Let
$K_n/K$ be the field extension corresponding to
$Ker(G_K\rightarrow Aut_{\Lambda}(T_pG/p^n))$. Denote the $p$-adic
$G_K$-representation $T_pG\otimes\Q_p$ by $V_pG$: to it one can
apply Fontaine's theory. By Fontaine's ring $B_{dR,K}$ (here we
just recall that it is a filtered $G_K$-module - see
\cite[\S2]{kato1} for more) one constructs the filtered module
$D_{dR,K}(V_pG):=H^0(K,B_{dR,K}\otimes_{\Q_p}V_pG)$. One has that
$gr^{-1}D_{dR,K}(V_pG)$ is canonically isomorphic to $\Q\otimes
Lie(G)$ and  $dim_K(D_{dR,K}(V_pG))=dim_{\Q_p}(V_pG)$. Then one
defines a map \cite[Proposition(2.3.3),p.118]{kato}
$$F_{DR}:H^{\ell-1}(K,\Z_p(\ell)\otimes
Hom_{\Lambda}(T_pG,\Lambda))\rightarrow
\hat{\Omega}^{\ell-1}_{K}\otimes_{\mathcal{O}_K}coLie(G)$$ where
$\hat{\Omega}^{r}_{K}$ is the $p$-adic completion of
$\Omega^r_{\mathcal{O}_K/\Z}$ tensored by $\Q$.

Kato extends the above $\varrho_{\varepsilon}$ to
\[\begin{CD}\varrho_{\varepsilon}:\plim{n}K_{\ell}^M(K_n)
 @>GSM>> H^{\ell}({K},\Z_p(\ell)\otimes
 Hom_{\Lambda}(T_pG,\Lambda))\\
 @>F_{DR}>> \hat{\Omega}^{\ell-1}_K\otimes_{\mathcal{O}_K}coLie(G)\,.
\end{CD} \]
The recipe to construct $GSM$ is: fix $n$, then compose the Galois
symbol map defined in \S2.4 (but with coefficients in
$(\Z/p^n\Z)(\ell))$ instead of $\Z_p(\ell)$) with
$\cup\varepsilon_n^{-1}:H^{\ell}(K_n,(\Z/p^n\Z)(\ell))\rightarrow
H^{\ell}(K_n,(\Z/p^n\Z)(\ell)\otimes Hom_{\Lambda}(T_pG,\Lambda))$
and take the trace map to have the cohomology group over $K$, and
finally take the limit with respect to $n$.

As for $\delta_{\varepsilon}$: up to some technical details which we
do not reproduce here (see \cite[p.119]{kato}) it is essentially the
map $\dlog: K_{\ell}^M(K_n)\rightarrow\hat{\Omega}^{\ell}_K$ given
by $\{\alpha_1,\ldots,\alpha_{\ell}\}\mapsto
\dlog(\alpha_1)\wedge\cdots\wedge\dlog(\alpha_{\ell})$.

\begin{teo}\label{theo10} One has
$\varrho_{\varepsilon}=(-1)^{\ell-1}\delta_{\varepsilon}$
\cite[Theorem 6.1.9]{kato}.
\end{teo}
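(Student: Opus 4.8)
The plan is to verify the identity $\varrho_{\varepsilon}=(-1)^{\ell-1}\delta_{\varepsilon}$ by first reducing to the one-dimensional case and then invoking the reciprocity law of Theorem \ref{theor1}. Both $\varrho_{\varepsilon}$ and $\delta_{\varepsilon}$ are $\mathcal{O}_K$-linear continuous homomorphisms out of $\plim{n}K_{\ell}^M(K_n)$, so it suffices to test the equality on a set of topological generators and to control its behaviour along the tower $(K_n)_n$. First I would record the formal properties shared by the two constructions: compatibility of $GSM$ with the norm/transfer maps used in forming the projective limit, compatibility of $F_{DR}$ with the filtered-module structure furnished by Fontaine's theory, and the Leibniz and graded-commutativity rules satisfied by $\dlog$ on Milnor $K$-theory. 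These are what make both sides genuine homomorphisms and permit a reduction to generators.

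Next I would reduce the dimension. Exactly as in Kurihara's diagram preceding Theorem \ref{theokurih}, one writes $K$ as an iterated Laurent-type field over a one-dimensional base and strips off the variables $T_1,\dots,T_{\ell-1}$ by means of the residue maps $Res$ and $\underline{Res}$. Since $\delta_{\varepsilon}$ is essentially $\dlog(\alpha_1)\wedge\cdots\wedge\dlog(\alpha_{\ell})$ while $GSM$ is built from iterated cup products, each residue extraction commutes with both constructions up to the sign coming from anticommutativity of the cup product, equivalently from reordering the factors $\dlog T_i$ in the wedge. Accumulating these signs as one descends from level $\ell$ to level $1$ is precisely what produces the factor $(-1)^{\ell-1}$. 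The main bookkeeping here is to check that the residue of $F_{DR}$ agrees with $F_{DR}$ of the base field, which should follow from the compatibility of the $B_{dR,K}$-constructions with the residue filtration.

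In the one-dimensional case ($\ell=1$) I would identify each side with a pairing already discussed in \S3. On the Galois side, the cohomological interpretation (\ref{eq8}) together with \cite[Proposition 6.1.1]{fukaya} identifies the class produced by $GSM$ and the twist $\cup\varepsilon_n^{-1}$ with the limit Kummer pairing (\ref{eq9})--(\ref{eq10}); composing with $F_{DR}$ and Kato's trace pairing then realises $\varrho_{\varepsilon}$ as the $coLie(G)$-valued form attached to $(\;,\;)$. On the analytic side, unwinding $\delta_{\varepsilon}$ through the $\dlog$ homomorphism and the Coleman series $Col_u$ realises it as the form attached to the analytic pairing $[\;,\;]$. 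At this point Theorem \ref{theor1}, namely Wiles' reciprocity law for classical Lubin-Tate groups and its Drinfeld analogue, gives $(\;,\;)=[\;,\;]$, and the two forms coincide.

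The hard part will be the explicit control of the Fontaine comparison map $F_{DR}$: everything else is formal transport of structure, but the genuine content is that the period-ring construction $D_{dR,K}(V_pG)$, together with the canonical identification $gr^{-1}D_{dR,K}(V_pG)\cong\Q\otimes Lie(G)$ and Kato's trace pairing into $coLie(G)$, matches the exponential--$\dlog$ description on the analytic side. Concretely, one must show that $F_{DR}$ sends the Galois class obtained by cupping with $\varepsilon_n^{-1}$ to exactly the differential $\pi^{-n}\lambda(a_n)\,\dlog Col_u(\varepsilon_n)$ appearing in $[\;,\;]$. I expect this to require a direct $B_{dR,K}$-computation on the generator $\varepsilon$, using that $\cup\varepsilon_n^{-1}$ implements the Tate twist and that the exponential on $Lie(G)$ inverts the logarithm $\lambda$ up to the prescribed normalisation; the sign $(-1)^{\ell}$ in (\ref{eq8}) is what should ultimately balance against the $(-1)^{\ell-1}$ of the statement once the dimension reduction is carried out.
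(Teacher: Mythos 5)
First, be aware that the paper offers no proof of Theorem \ref{theo10}: it is quoted directly from \cite[Theorem 6.1.9]{kato}, so there is no internal argument to measure your proposal against; it has to be judged on its own terms, and as it stands it has two genuine gaps. The first is the dimension reduction. The residue-map d\'evissage you borrow from Kurihara's diagram works there because the field is presented as $K_0=M\{\{T_1\}\}\cdots\{\{T_{\ell-1}\}\}$ with the cyclotomic tower living over the one-dimensional field $M$, so the variables $T_i$ can be stripped off symbols one at a time. In Theorem \ref{theo10} the tower $K_n$ is cut out by the torsion of an arbitrary one-dimensional $p$-divisible group $G$ over $\mathcal{O}_K$ with $\Lambda$-action: there is no canonical presentation of $K$ as an iterated $\{\{T\}\}$-field compatible with that tower, $G$ need not descend to a one-dimensional subfield, and the asserted compatibility of $F_{DR}$ with residue maps is not formal ``transport of structure'' --- $F_{DR}$ is built from $B_{dR,K}$ and the filtration on $D_{dR,K}(V_pG)$, not from a Laurent-series presentation, so this compatibility is essentially the content one would have to prove. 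The claim that ``each residue extraction commutes with both constructions up to sign'' is precisely the step that carries the whole weight of the theorem and is left unjustified; the sign bookkeeping producing $(-1)^{\ell-1}$ is likewise asserted rather than checked against the $(-1)^{\ell}$ in (\ref{eq8}).

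The second gap is the base case. Even granting $\ell=1$, identifying $F_{DR}\circ GSM$ with the $coLie(G)$-valued form attached to the Kummer pairing, and $\delta_{\varepsilon}$ with the form attached to $[\;,\;]$, requires exactly the explicit $B_{dR}$-computation on the generator $\varepsilon$ that you defer to your last paragraph; without it the appeal to Theorem \ref{theor1} proves nothing. Note also that your logical direction is opposite to Kato's as described in this survey (opening of \S 4 and \S 4.2.2): Kato proves the cohomological statement independently and thereby \emph{recovers} Wiles'/Iwasawa's law, rather than deducing the cohomological statement from it; moreover Theorem \ref{theo10} covers groups with $\Lambda$-action beyond classical Lubin--Tate, where Theorem \ref{theor1}(1) is not available as input, and part (2) of Theorem \ref{theor1} concerns characteristic $p$ and is irrelevant here since $char(K)=0$. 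So even where your strategy is coherent, the two substantive steps --- residue compatibility of $F_{DR}$ and the explicit period computation --- are exactly the ones missing.
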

\begin{obs2}\label{rk11}{\em Kato obtains a generalized
reciprocity law when $dim(G)=1$, $T_pG$ is $\Lambda$-free with
$rank_{\Lambda}T_pG=\ell \geq1$ and  some technical conditions
from Fontaine's theory are satisfied \cite[Theorem 4.3.4]{kato}.
He also defines $\varrho^s$ (from Fontaine's theory) and
$\delta^s$ (on the $\dlog$ side) with $s\in\N^{\ell}$ as maps
$\plim{n}K_{\ell}^M(Y_n)\rightarrow
\mathcal{O}(Y_m)\otimes_{\mathcal{O}_K}coLie(G)^{\otimes
(\ell+r(s))}$ where $Y_n$ is a scheme representing a functor
related with the $p^n$-torsion points associated to $G$ and
$r(s)\in\N$. }
\end{obs2}
\begin{obs2} {\em Kato's generalized reciprocity laws (theorem
\ref{theo10} and remark \ref{rk11}) did not include a
generalization of Coleman power series. Fukaya \cite{fukaya2}
obtains a $K_2$-analog for the Coleman isomorphism}.
\end{obs2}
\begin{obs2}\label{rk12} {\em In the previous paper \cite[II]{kato1} Kato had obtained a reciprocity law for a
map $\hat{\varrho}_{\varepsilon}$ whose definition involves the
Kummer map (\ref{eq10}) for the tower of fields given by any
classical Lubin-Tate formal group (i.e. $\ell=1$ and $K=M$) and
the dual exponential map (see below) associated to a certain
representation of the formal group (more precisely, to a power of
a Hecke character obtained from a CM elliptic curve). He relates
$\hat{\varrho}_{\varepsilon}$ with a map
$\hat{\delta}_{\varepsilon}$ constructed mainly from $\dlog$. This
reciprocity law is expressed in terms of Coleman power series
\cite[II]{kato1}. Tsuji in \cite[I]{tsuji} extends Kato's
reciprocity law \cite[II]{kato1} to representations coming from
more general Hecke characters}.
\end{obs2}
\end{subsubsection}
\begin{subsubsection}{Global approach}
Now the base field is $M$, in particular $\ell=1$ and
$car(k_M)=p>0$. As explained in \cite[\S3.3]{scholl}, Kato proves
that Wiles' reciprocity (or rather Iwasawa's, since it is the case
$\mathcal{F}_f=\mathbb{G}_m$) is equivalent to the commutativity of
the following diagram (which follows from cohomological properties):
\[\begin{CD}
\Q\otimes\plim{n,Norm}\mathcal{O}_{M(\zeta_{p^n})}^*
@>>{Kummer}> \Q\otimes\plim{n}Hom_{cont}(G_{M(\zeta_{p^n}))},\boldsymbol{\mu}_{p^n}) \\
@V{\dlog}VV   @VV{\zeta_{p^n}\mapsto 1}V \\
\Q\otimes\plim{n,Trace}\Omega^1_{\mathcal{O}_{M(\zeta_{p^n})}/\mathcal{O}_M}
&& \Q\otimes\plim{n} H^1(M(\zeta_{p^n}),\Z/p^n) \\
@V{\dlog\zeta_{p^n}\mapsto 1}VV @VV{cor}V \\
\Q\otimes\plim{t_{-,-}}\mathcal{O}_M[\zeta_{p^n}]/p^n && \Q\otimes\plim{n}H^1(K_m,\Z/p^n) \\
@V{(t_{n,m})_n}VV  @VV{inc}V \\
M(\zeta_{p^m}) @>{\cong}>{\cup\frac{1}{p^m}\log\chi_{cycl}}>
H^1(M(\zeta_{p^m}),\hat{\overline{M}})
\end{CD}\]
where
$t_{n,m}:=\frac{1}{p^{n-m}}Tr_{M(\zeta_{p^n})/M(\zeta_{p^m})}$,
$cor$ is the corestriction map, $\chi_{cycl}$ is the cyclotomic
character as in \S3, $\cup\log\chi_{cycl}$ is the cup product by the
element $\log\chi_{cycl}\in Hom_{cont}(G_M,\Z_p)$, $inc$ is the map
induced by $\plim{n}\Z/p^n=\Z_p\hookrightarrow\hat{\overline{M}}$
and $Hom_{cont}(G_{M(\zeta_{p^n})},\boldsymbol{\mu}_{p^n})\cong
H^1(M(\zeta_{p^n}),\Z/p^n(1))$ by $\zeta_{p^n}\mapsto 1$.



Inspired by this cohomological approach, Kato formulates a new
reciprocity law. To state it we need to introduce some more
notation. Take a smooth $\mathcal{O}_M$-scheme $U$, complement of a
divisor $Z$ with relatively normal crossings in a smooth proper
$\mathcal{O}_M$-scheme $X$. We assume there is a theory of Chern
classes for higher Quillen $K$-theory giving functorial
homomorphisms
$$ch_{r}:K_r(U\otimes_{\mathcal{O}_M}\mathcal{O}_{M(\zeta_{p^n})})\rightarrow
H^{r}_{et}(U\otimes_{M}{M(\zeta_{p^n})},\Z_p(r))$$ where the $K_r$'s
are Quillen $K$-theory groups and $H_{et}$ denotes continuous
\`etale cohomology. By the Hochschild-Serre spectral sequence  we
obtain a map
$$HS\circ
ch_{r}:K_r(U\otimes_{\mathcal{O}_M}\mathcal{O}_{M(\zeta_{p^n})})\rightarrow
H^{r}(M(\zeta_{p^n}),H^{r-1}(U\otimes_M\overline{M},\Z_p(r))).$$
Now $V:=H^{r-1}(U\otimes_M\overline{M},\Z_p(r))$ is a $p$-adic
$G_M$-representation and Fontaine's theory applies. By Fontaine's
ring $B_{dR,M}$ (here we just recall that it is a filtered
$G_M$-module - see \cite[\S2]{kato1} for more) one constructs the
filtered $M$-vector space
$D_{dR,M}(V):=(B_{dR,M}\otimes_{\Q_p}V)^{G_M}$. Suppose that $V$ is
de Rham, that is $dim_M(D_{dR,M}(V))=dim_{\Q_p}(V)$. Then the dual
exponential map $exp^*:H^1(M,V)\rightarrow D_{dR,M}^0(V)$ is given
by the composition of
\[\begin{CD}
H^1(M,V)@>inclusion >> H^1(M,\hat{\overline{M}}\otimes_{\Q_p}V)\\
@>\cong>>
H^1(M,\hat{\overline{M}})\otimes_MD_{dR,M}^0(V)@>\cong\cup\log\chi_{cycl}>>D_{dR,M}^0(V),
\end{CD} \]
where $\cup\log\chi_{cycl}$ is the Tate isomorphism
$M=H^0(M,\hat{\overline{M}})\rightarrow H^1(M,\hat{\overline{M}})$
and the first isomorphism is induced by the Hodge-Tate
decomposition $\hat{\overline{M}}\otimes_{\Q_p}V\cong
\oplus_{i\in\Z}\hat{\overline{M}}(-i)\otimes_M gr^iD_{dR,M}(V)$
\cite[p.407]{scholl} (for more on $exp^*$ and how it fits into a
motivic Tamagawa number framework see \cite{blochkato}).

On the $\dlog$ side we need a Chern character into de Rham
cohomology (cohomology of differentials). When $X=Spec(R)$ is
noetherian it exists a map
$\dlog:K_q(Spec(R))\rightarrow\Omega^q_{R/\Z}$ satisfying
$\dlog(a\cup b)=\dlog(a)\wedge\dlog(b)$, and other properties
\cite[p.393]{scholl}. Denote
$H^i_{dR}(U/\mathcal{O}_M):=H^i(X,\Omega^{\cdot}_{X/\mathcal{O}_M}(Z))$
the hypercohomology of the de Rham complex of differentials on $X$
with logarithmic singularities over $Z$: it has a natural
filtration. We recall that
$$D_{dR,M}^0(V)=H^0(X,\Omega^1_{X/\mathcal{O}_M}(\log
Z))\otimes_{\mathcal{O}_M}M=Fil^1H^1_{dR}(U/\mathcal{O}_M)\otimes_{O_M}M.$$

For simplicity assume $M=M_0$, so that
$\Omega^1_{\mathcal{O}_{M(\zeta_{p^n})}/\mathcal{O}_M}$ is generated
by $\dlog\zeta_{p^n}$ and $\mathfrak{d}_{M(\zeta_{p^n})/M}$ is
$p^n(\zeta_{p}-1)^{-1}\mathcal{O}_M[\zeta_{p^n}]$.
\begin{teo}\label{katofin}(Explicit reciprocity law) Take $p>2$. Let $X$ be a
smooth and proper curve over $\mathcal{O}_M$ and take an affine
$U\subset X$ as above. Then the following diagram commutes: {\tiny
\[ \begin{CD}
\plim{n}(K_2(U\otimes\mathcal{O}_{M(\zeta_{p^n})}))\otimes\boldsymbol{\mu}_{p^n}^{\otimes
-1} @>>{HS\circ ch_2}> \plim{n}H^1(M(\zeta_{p^n}),H^1_{et}(U\times_M\overline{M},\boldsymbol{\mu}_{p^n})) \\
@VV{\dlog}V @V{cor}VV \\
\plim{n}H^0(X\otimes\mathcal{O}_{M(\zeta_{p^n})},\Omega^2_{X\otimes\mathcal{O}_{M(\zeta_{p^n})}/\mathcal{O}_M}(\log
Z))(-1) && \plim{n\geq m}H^1(M(\zeta_{p^m}),H^1(U\times_M\overline{M},\boldsymbol{\mu}_{p^n})) \\
@VV{=}V @V{\cong}VV \\
\plim{n}\Omega^1_{\mathcal{O}_{M(\zeta_{p^n})}/\mathcal{O}_M}\otimes
Fil^1(H^1_{dR}(U/\mathcal{O}_M)) && H^1(M(\zeta_{p^m}),H^1(U\times_M\overline{M},\Z_p)(1)) \\
@VVV  @V{exp^*}VV \\
\plim{n}\mathcal{O}_{M(\zeta_{p^n})}/\mathfrak{d}_{M(\zeta_{p^n})/M}\otimes_{\mathcal{O}_M}
Fil^1H^1_{dR}(U/\mathcal{O}_M) @>>{tr}>
M(\zeta_{p^m})\otimes_{\mathcal{O}_M}Fil^1H^1_{dR}(U/\mathcal{O}_M)
\end{CD} \]
} where $tr$ denotes
$(\frac{1}{p^n}tr_{M(\zeta_{p^n})/M(\zeta_{p^m})})_{n\geq m}$.
\end{teo}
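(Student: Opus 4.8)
The plan is to read the whole diagram as the comparison between the \'etale realization of $U$ over the cyclotomic tower (the right column, built from $HS\circ ch_2$ and terminating in the dual exponential $exp^*$) and its de Rham realization (the left column, built from $\dlog$ and the trace along the different), and then to deduce commutativity from the single case $U=\widehat{\mathbb{G}}_m$, where it collapses to Iwasawa's reciprocity law. The latter is exactly the commutative diagram displayed just before the statement, whose commutativity is equivalent to Theorem \ref{theo2}; this is the one genuinely arithmetic input, and everything else is a matter of propagating it functorially.

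First I would exploit multiplicativity to reduce to Steinberg symbols. The Chern class is multiplicative, $ch_2(\{f,g\})=ch_1(f)\cup ch_1(g)$, and the Hochschild--Serre edge maps respect cup products, so the image of a symbol $\{f,g\}$ in the top right corner is completely determined by the Kummer classes of the units $f,g\in\mathcal{O}(U\otimes\mathcal{O}_{M(\zeta_{p^n})})^*$. On the left, $\dlog\{f,g\}=\dlog f\wedge\dlog g$ by the analogous rule recorded in the text. Since $K_2$ of an affine curve is generated by such symbols, it suffices to check the diagram on them, and both columns are then assembled from the first-Chern-class / $\dlog$ data of single units.

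Next I would separate the cyclotomic variable from the geometry of $U$. Each unit $f$ is a morphism $f\colon U\to\widehat{\mathbb{G}}_m$, and the Chern classes into continuous \'etale cohomology, the spectral-sequence edge maps, the corestriction, $\dlog$ and $exp^*$ are all functorial for it; hence $ch_1(f)=f^*ch_1(t)$ and $\dlog f=f^*\dlog t$, with $t$ the coordinate on $\widehat{\mathbb{G}}_m$. Writing $V=H^1_{et}(U\times_M\overline{M},\Z_p(1))$, the representation that enters Fontaine's theory factors, after Tate twisting, into the cyclotomic part handled by the $\widehat{\mathbb{G}}_m$ diagram and the ``geometric'' factor $Fil^1H^1_{dR}(U/\mathcal{O}_M)$, which is transported along by the compatibility of the cup products with the comparison isomorphism. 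In this way the commutativity for a symbol on $U$ is pulled back from, and reduced to, Iwasawa's diagram for $\widehat{\mathbb{G}}_m$.

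The hard part will be the passage through Fontaine's theory in the bottom two rows, namely checking that $exp^*$ --- defined via the Hodge--Tate decomposition $\hat{\overline{M}}\otimes_{\Q_p}V\cong\oplus_{i\in\Z}\hat{\overline{M}}(-i)\otimes_M gr^iD_{dR,M}(V)$ and the Tate cup product $\cup\log\chi_{cycl}$ --- really matches, compatibly along the tower, the explicit trace-and-different description on the left. Concretely one must verify that Fontaine's isomorphism (the map $\Psi_M$ used in the reformulation of Sen's theorem) intertwines the normalized trace $\tfrac{1}{p^n}tr_{M(\zeta_{p^n})/M(\zeta_{p^m})}$ on the de Rham side with the corestriction $cor$ on the Galois side, under the identification $D_{dR,M}^0(V)\cong Fil^1H^1_{dR}(U/\mathcal{O}_M)\otimes_{\mathcal{O}_M}M$. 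The computation of the different $\mathfrak{d}_{M(\zeta_{p^n})/M}=p^n(\zeta_p-1)^{-1}\mathcal{O}_M[\zeta_{p^n}]$ supplies precisely the constant that makes the bottom square commute, and pinning this constant down uniformly in $n$ --- one of the places where the hypothesis $p>2$ is used --- is the delicate point of the argument.
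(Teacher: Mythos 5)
The paper offers no proof of Theorem \ref{katofin} to compare against: this is a survey, and the theorem is quoted from Kato \cite{kato1} (see \cite[\S 3.3]{scholl} for an exposition); the only surrounding commentary runs in the \emph{opposite} logical direction, namely that Iwasawa's Theorem \ref{theo2} is \emph{recovered} from Theorem \ref{katofin} by taking $X=\mathbb{P}^1$, $U=\mathbb{G}_m$ and the symbols $\{u_n,t\}$. Your proposal inverts this: you want to deduce the general statement from the $\widehat{\mathbb{G}}_m$ case by functoriality, and that reduction does not go through.

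Two concrete obstructions. First, $K_2(U\otimes\mathcal{O}_{M(\zeta_{p^n})})$ is not generated by Steinberg symbols $\{f,g\}$ with $f,g$ global units of the coordinate ring: Matsumoto's theorem is a statement about fields (or local rings), and for an affine curve the interesting $K_2$-classes (those with trivial tame symbols on $U$ but built from functions with zeros and poles on $Z$) are precisely the ones that are not products of symbols of units of $\mathcal{O}(U)$; so your very first reduction already discards most of the group the theorem is about. Second, even on a symbol $\{f,g\}$ the ``separation of variables'' fails: the component of $ch_1(f)\cup ch_1(g)$ in $H^1\bigl(M(\zeta_{p^n}),H^1_{et}(U\times_M\overline{M},\boldsymbol{\mu}_{p^n})\bigr)$ picked out by Hochschild--Serre depends on the cup-product structure of $H^*_{et}(U\times_M\overline{M})$, whereas $H^1_{et}(\mathbb{G}_{m,\overline{M}},\Z_p(1))\cong\Z_p(1)$ carries no trace of the factor $Fil^1H^1_{dR}(U/\mathcal{O}_M)$; nothing in the diagram is literally pulled back along a single map $f\colon U\to\mathbb{G}_m$. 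What actually carries the proof in \cite{kato1} and \cite{scholl} is the $p$-adic Hodge-theoretic comparison between the \'etale and de Rham Chern classes and an explicit computation of $exp^*$ against the normalized traces along the cyclotomic tower (via the behaviour of $B_{dR}$ and the differents $\mathfrak{d}_{M(\zeta_{p^n})/M}$); this is genuinely new input beyond Iwasawa's formula, not a formal consequence of it. Your last paragraph correctly identifies this as ``the hard part,'' but it is precisely the part for which no argument is given, and your guess about where $p>2$ enters remains a guess.
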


\begin{obs2} {\em Take $X=\mathbb{P}^1$ and $U=\mathbb{A}^1-\{0\}=Spec(\mathcal{O}_M[t,t^{-1}])$. Take
$(u_n)_n\in\plim{n}\mathcal{O}_{M(\zeta_{p^n})}^*$ and consider
$\{u_n,t\}\in K_2(U\otimes\mathcal{O}_{M(\zeta_{p^n})})$. Then from
theorem \ref{katofin} one recovers Iwasawa's theorem \ref{theo2}
\cite[Remark p.411]{scholl}. See also remark \ref{rk12}. }
\end{obs2}

\begin{obs2} {\em Theorem \ref{katofin} has been vastly extended
by H\"ark\"onen \cite{har}: he shows that, under some technical
assumptions, the map $exp^*\circ corestriction \circ HS\circ ch_r:
\plim{n}K_r(U\otimes_{\mathcal{O}_M}M(\varepsilon_n))\rightarrow
M(\varepsilon_m)\otimes D_{dR,M}^0(V)$ can be computed in terms of
a $\dlog$ map, for any $r$ with $1\leq r\leq p-2$ (as before
$\varepsilon$ is a fixed generator of $T_pF_f$ for a Lubin-Tate
formal group $F_f$ over $\mathcal O_M$). We refer to \cite{har}
for precise statements.}
\end{obs2}
\end{subsubsection}

\end{subsection}
\begin{subsection}{Rank 1 Drinfeld modules}
Kato's approach inspired the construction of the diagram presented
in this subsection, which is the only new material of this survey
paper. As in the rest of this work, we restrict ourselves to the case of the Carlitz module; however, we remark that theorem \ref{balo} can easily be extended to any rank 1 sign-normalized Drinfeld module, with exactly the same proof ({\em mutatis mutandis}: the changes are the same necessary to pass from the function field results exposed in this paper to the more general statements of \cite{bl}).

Remember that the Galois action on the module of K\"ahler
differentials $\Omega_{\ol_{\Phi,n}/\ol}$ is given by
$\sigma(\alpha d\beta)=\sigma(\alpha)d\sigma(\beta)$, $\sigma\in
Gal(K_{\Phi,n}/K)$, $\alpha, \beta\in\ol_{\Phi,n}$. In particular,
 one has
$$\sigma(d\varepsilon_n)=d(\sigma\varepsilon_n)=d(\Phi_{\chi(\sigma)}(\varepsilon_n))=\chi(\sigma)d\varepsilon_n\,.$$
Besides, $d\varepsilon_n=d\Phi_{\pi^k}(\varepsilon_{n+k})=\pi^kd\varepsilon_{n+k}$.\\

By $\liminv\Omega_{\ol_{\Phi,n}/\ol}$ we denote the limit with
respect to the trace map, defined as usual by
$Tr_m^n(\omega):=\sum_{\sigma\in
Gal(K_{\Phi,n}/K_{\Phi,m})}\sigma(\omega)$ where
$Tr_m^n=Tr_{K_{\Phi,n}/K_{\Phi,m}}$.

\begin{lema} \label{limdiff} Let $(\omega_n)_n\in\liminv\Omega_{\ol_{\Phi,n}/\ol}$
and for each $n$ choose $x_n\in\ol_n$ so that
$\omega_n=x_nd\varepsilon_n$. Then the sequence
$y_n:=\pi^{-n}Tr^n_m(x_n)$ converges to a limit in $K_{\Phi,m}$ for any fixed $m$.
\end{lema}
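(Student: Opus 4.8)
The plan is to prove that the sequence $(y_n)_{n\ge m}$ is Cauchy in $K_{\Phi,m}$; since $K_{\Phi,m}$ is a complete non-archimedean field, convergence is then equivalent to $y_{n+1}-y_n\to 0$. Fix the valuation $v$ on $\overline{K}$ normalized by $v(\pi)=1$. First I would translate the compatibility condition $Tr^{n+1}_n(\omega_{n+1})=\omega_n$ coming from $(\omega_n)_n\in\liminv\Omega_{\ol_{\Phi,n}/\ol}$ into a relation between $x_{n+1}$ and $x_n$. Using $\sigma(d\varepsilon_{n+1})=\chi(\sigma)d\varepsilon_{n+1}$ and $d\varepsilon_n=\pi d\varepsilon_{n+1}$, one computes $Tr^{n+1}_n(\omega_{n+1})=\big(\sum_{\sigma}\chi(\sigma)\sigma(x_{n+1})\big)d\varepsilon_{n+1}$ and $\omega_n=\pi x_n\,d\varepsilon_{n+1}$, where $\sigma$ runs over $Gal(K_{\Phi,n+1}/K_{\Phi,n})$. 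Since $\Omega_{\ol_{\Phi,n+1}/\ol}$ is free of rank one on $d\varepsilon_{n+1}$ over $\ol_{n+1}/\mathfrak d_{K_{\Phi,n+1}/K}$, this yields the congruence $\sum_{\sigma}\chi(\sigma)\sigma(x_{n+1})\equiv \pi x_n \pmod{\mathfrak d_{K_{\Phi,n+1}/K}}$.

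Next I would write $y_{n+1}-y_n=\pi^{-(n+1)}Tr^n_m(u_n)$ with $u_n:=Tr^{n+1}_n(x_{n+1})-\pi x_n$ (using transitivity $Tr^{n+1}_m=Tr^n_m\circ Tr^{n+1}_n$), and estimate $v(u_n)$. I would split $u_n$ as $\sum_{\sigma}(1-\chi(\sigma))\sigma(x_{n+1})$ plus $\big(\sum_{\sigma}\chi(\sigma)\sigma(x_{n+1})-\pi x_n\big)$. For $\sigma\in Gal(K_{\Phi,n+1}/K_{\Phi,n})$ one has $\Phi_{\chi(\sigma)}(\varepsilon_n)=\varepsilon_n$, so that $\Phi_{\chi(\sigma)-1}(\varepsilon_n)=0$; as $\varepsilon_n$ generates $W_{\Phi,\pi^n}$, this forces $\pi^n\mid\chi(\sigma)-1$ in $\ol$. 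Hence the first sum lies in $\pi^n\ol_{n+1}$, while the second lies in $\mathfrak d_{K_{\Phi,n+1}/K}$ by the congruence above. Therefore $v(u_n)\ge\min\big(n,\,v(\mathfrak d_{K_{\Phi,n+1}/K})\big)$.

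The heart of the argument is a valuation computation on differents. Because $\Phi_\pi(X)=\pi X+(\text{higher }q\text{-power terms})$ and we are in characteristic $p$, one has $\Phi_\pi'(X)=\pi$; since $\varepsilon_j$ satisfies the Eisenstein equation $\Phi_\pi(X)=\varepsilon_{j-1}$ over $K_{\Phi,j-1}$, the different of each wild step is $\mathfrak d_{K_{\Phi,j}/K_{\Phi,j-1}}=(\pi)$, so $v(\mathfrak d_{K_{\Phi,j}/K_{\Phi,j-1}})=1$. Summing over the tower gives $v(\mathfrak d_{K_{\Phi,n+1}/K})\ge n$ and $v(\mathfrak d_{K_{\Phi,n}/K_{\Phi,m}})=n-m$; in particular $v(u_n)\ge n$. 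Combining with the sharp trace bound $v(Tr^n_m(a))\ge v(a)+v(\mathfrak d_{K_{\Phi,n}/K_{\Phi,m}})-c_m$ (a restatement of the fact that the codifferent $\mathfrak d_{K_{\Phi,n}/K_{\Phi,m}}^{-1}$ is the largest fractional ideal whose trace lands in $\ol_m$, with $c_m$ a bounded rounding constant depending only on $m$), I obtain $v(y_{n+1}-y_n)\ge -(n+1)+n+(n-m)-c_m=n-m-1-c_m\to\infty$, so $(y_n)$ is Cauchy and converges in $K_{\Phi,m}$.

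I expect the main obstacle to be precisely this last estimate. The naive bound $v(Tr^n_m(a))\ge v(a)$ (valid because Galois conjugates share a valuation) only gives $v(y_{n+1}-y_n)\ge -1$ and is useless; what forces convergence is the valuation \emph{gained} through the trace, as measured by the different, together with its linear growth $v(\mathfrak d_{K_{\Phi,n}/K_{\Phi,m}})=n-m$. This is the function-field analogue of the precise valuation calculations underlying Wiles' approach mentioned in Section 3.
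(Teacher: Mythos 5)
Your proof is correct and follows essentially the same route as the paper's: the same decomposition of $Tr_n^{n+k}(x_{n+k})-\pi^k x_n$ into a piece controlled by $\chi(\sigma)-1\in\pr^n$ and a piece lying in the different, followed by the same trace-valuation estimate showing $(y_n)$ is Cauchy. The only (harmless) difference is that you prove the two valuation inputs inline --- $v(\mathfrak{d}_{K_{\Phi,n}/K_{\Phi,m}})=n-m$ via $\Phi_\pi'(X)=\pi$, and the sharpened bound $v(Tr^n_m(a))\ge v(a)+n-m-1$ --- where the paper simply cites Lemma 3 and Corollary 4 of \cite{bl}.
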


\begin{proof} To lighten notation put $G_n^r:=Gal(K_{\Phi,r}/K_{\Phi,n})$, $r>n$. The diagram
{\footnotesize \[ \begin{CD}
0 @>>> G_n^r @>>> Gal(K_{\Phi,r}/K) @>>> Gal(K_{\Phi,n}/K) @>>> 0\\
&&   @V{\simeq}VV     @V{\simeq}VV  @V{\simeq}VV \\
0 @>>> (1+\pr^n)/(1+\pr^r) @>>> (A/\pr^r)^* @>>>
(A/\pr^n)^* @>>> 0
\end{CD} \] }
(where vertical maps are induced by $\chi$) shows that $\chi(\sigma)-1 \in\pr^n$ for $\sigma\in G_n^r$.\\
The equality $\omega_n=Tr_n^{n+k}\omega_{n+k}$ can be rewritten as
$$x_n\pi^{k}d\varepsilon_{n+k}=x_nd\varepsilon_n=\sum_{\sigma\in G_n^{n+k}}\sigma(x_{n+k})\chi(\sigma)d\varepsilon_{n+k}\,,$$
that is, $\pi^kx_n=\sum\sigma(x_{n+k})\chi(\sigma)+\delta_{n,n+k}$
for some $\delta_{n,n+k}\in\frak{d}_{K_{\Phi,n+k}/K}$. Let
$$z_{n,n+k}:=\sum_{\sigma\in G_n^{n+k}}\sigma(x_{n+k})(\chi(\sigma)-1)+\delta_{n,n+k}\,.$$
By \cite[Lemma 3]{bl} we know $v(\delta_{n,n+k})>n+k-1$; together
with the observation above, this implies that $v(z_{n,n+k})\geq
n$. It is computed in \cite[cor.4]{bl} that
$v(Tr^n_m(a))>v(a)+n-m-1$: applying it to
$$y_n-y_{n+k}=\frac{1}{\pi^{n+k}}Tr_m^n\left(x_n\pi^k-\sum_{\sigma\in G_n^{n+k}}\sigma(x_{n+k})\right)=Tr_m^n\left(\frac{z_{n,n+k}}{\pi^{n+k}}\right)$$
we see that $v(y_n-y_{n+k})\geq n-(m+k+1)$, proving that the $y_n$'s form a Cauchy
sequence.
\end{proof}

By abuse of notation, we denote the limit in lemma \ref{limdiff}
as
$\displaystyle\lim_{n\rightarrow\infty}\frac{1}{\pi^n}Tr^n_m\!\!\left(\frac{\omega_n}{d\varepsilon_n}\right)\,.$

 Inspired by \cite[\S1.1]{kato} and \cite[Theorem 3.3.15]{scholl}, we construct the following
 diagram:

\[ \begin{CD}
\liminv K_{\Phi,n}^* @>(1)>> Hom_{cont}(\frak m_{\Phi,m},T_{p}\Phi)\\
  @VV(2)V     @V(4)VV\\
\liminv\F_p\frac{d\varepsilon_n}{\varepsilon_n}\oplus\Omega^1_{\ol_{\Phi,n}/\ol_{F_{\pr}}}
@>(3)>> K_{\Phi,m}\simeq Hom_{F_{\pr}}(K_{m,\Phi},F_{\pr})
 \end{CD} \]

\noindent Arrow (1) is the Kummer map: it sends $u=(u_n)_n$ to
$\displaystyle a\rightarrow\lim_{n\rightarrow\infty}(a,u_n)_n.$
(This limit exists in $T_{p}\Phi$:
$$\Phi_{\pi}(a,u_n)_n=((u_n,K_{\Phi,n}^{ab}/K_{\Phi,n})-1)\Phi_{\pi}(\psqrt[n]{a})=(a,u_{n-1})_{n-1}$$
because $\Phi_{\pi}\in\ol\{\tau\}$ commutes with the action of $G_K$; here $\psqrt[n]{a}$ is a root of $\Phi_{\pi}^n(X)=a$.)\\
As for (2), it is just $\dlog:u\mapsto\frac{du}{u}$, extended to
$K_{\Phi,n}^*$ by putting
$\dlog\varepsilon_n^i:=i\frac{d\varepsilon_n}{\varepsilon_n}$, as
described in \cite[\S 4.2.1]{bl}; the limit of differentials is
taken with respect to the trace and
$\frac{d\varepsilon}{\varepsilon}$ denotes the inverse system
$\frac{d\varepsilon_n}{\varepsilon_n}\,$.\\
 The isomorphism
$K_{\Phi,m}\simeq Hom_{F_{\pr}}(K_{\Phi,m},F_{\pr})$ is given by
the trace pairing: $b\in K_{\Phi,m}$ is sent to $a\mapsto
Tr_{K_{\Phi,m}/F_{\pr}}(ab)\,.$ The map (3) is
$$(\omega_n)_n\mapsto\lim_{n\rightarrow\infty}\frac{1}{\pi^n}Tr_{K_{\Phi,n}/K_{\Phi,m}}\!\!
\left(\frac{\omega_n}{d\varepsilon_n}\right)\,.$$ The definition
of (4) needs more explanation. The logarithm $\lambda$ has locally
an inverse $e:\m_{\Phi,n}^{t_n}\rightarrow\m_{\Phi,n}$ for $t_n\gg
0$; this can be extended to $\tilde e:K_{\Phi,n}\rightarrow
F_{\pr}\otimes_{\Phi}\m_{\Phi,n}$ (the tensor product is taken on
$A_{\pr}$, which acts on $\m_{\Phi,n}$ via $\Phi$) by putting
$\tilde e(\pi^iz):=\pi^i\otimes e(z)$ for $i\gg 0$. In order to
define (4), first remember the isomorphism $T_{p}\Phi\simeq
A_{\pr}$, via $a\cdot\varepsilon\leftrightarrow a$; then use
composition with $\tilde e$, $f\mapsto f\circ\tilde e$, to get
$Hom(\m_{\Phi,m},A_{\pr})\rightarrow Hom(K_{\Phi,m},F_{\pr})\,.$

\begin{teo} \label{balo} The diagram above is commutative.
\end{teo}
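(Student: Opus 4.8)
The plan is to fix an element $u=(u_n)_n\in\liminv K_{\Phi,n}^*$ and to chase it around the square, verifying that the two composites produce the same element of $Hom_{F_{\pr}}(K_{\Phi,m},F_{\pr})$. Set $b:=\lim_{n\to\infty}\pi^{-n}Tr^n_m(\dlog u_n/d\varepsilon_n)\in K_{\Phi,m}$, which exists by Lemma \ref{limdiff} applied to $\omega_n=\dlog u_n$. I claim that both paths equal the functional $z\mapsto Tr_{K_{\Phi,m}/F_{\pr}}(z\,b)$. For the composite $(2)$ followed by $(3)$ this is immediate from the definitions: arrow $(2)$ sends $u$ to the system $(\dlog u_n)_n$, arrow $(3)$ sends this to $b$, and the trace-pairing isomorphism $K_{\Phi,m}\simeq Hom_{F_{\pr}}(K_{\Phi,m},F_{\pr})$ identifies $b$ with precisely that functional. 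Hence the whole content is to show that $(1)$ followed by $(4)$ yields the same functional, and the essential tool is the explicit reciprocity law (the case $F_{\pr}$, $\Phi$ of Theorem \ref{theor1}), which replaces the arithmetic Kummer pairing by the analytic pairing built from $\dlog$.

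For the composite $(1)$ then $(4)$, first arrow $(1)$ sends $u$ to the Kummer homomorphism $\phi_u\colon a\mapsto\lim_n(a,u_n)_n$. I would rewrite each $(a,u_n)_n$ by means of $(\;,\;)_n=[\;,\;]_n$, obtaining at level $n$ the identity $(a,u_n)_n=c_n(a)\cdot\varepsilon_n$ with $c_n(a):=Tr_{K_{\Phi,n}/K}\!\big(\pi^{-n}\lambda(a)\,\dlog u_n/d\varepsilon_n\big)$, where $\cdot$ denotes the $A_{\pr}$-action on $W_{\Phi,\pi^n}$. Under the identification $W_{\Phi,\pi^n}\simeq A_{\pr}/\pi^n$ (generator $\varepsilon_n$) the value $(a,u_n)_n$ corresponds to the class of the scalar $c_n(a)$; taking the limit over $n$ and using $T_p\Phi\simeq A_{\pr}$ shows that $\phi_u$, read as an $A_{\pr}$-valued map, is $a\mapsto\lim_n c_n(a)$.

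Next I would unwind arrow $(4)$, which precomposes this $A_{\pr}$-valued map with $\tilde e$ after extending it $F_{\pr}$-linearly to $F_{\pr}\otimes_{\Phi}\m_{\Phi,m}$. For $z\in K_{\Phi,m}$ of sufficiently large valuation one has $\tilde e(z)=1\otimes e(z)$ with $\lambda(e(z))=z$, so the value of the composite at such $z$ is $\lim_n c_n(e(z))=\lim_n Tr_{K_{\Phi,n}/K}\!\big(\pi^{-n}\,z\,\dlog u_n/d\varepsilon_n\big)$. Factoring $Tr_{K_{\Phi,n}/K}=Tr_{K_{\Phi,m}/F_{\pr}}\circ Tr^n_m$ and pulling the level-$m$ element $z$ out of the inner trace gives $Tr_{K_{\Phi,m}/F_{\pr}}\!\big(z\cdot\lim_n\pi^{-n}Tr^n_m(\dlog u_n/d\varepsilon_n)\big)=Tr_{K_{\Phi,m}/F_{\pr}}(z\,b)$, matching the other path. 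Since both maps are $F_{\pr}$-linear, multiplication by powers of $\pi^{-1}$ lets one pass from the high-valuation $z$ to all of $K_{\Phi,m}$, so agreement on high-valuation elements suffices.

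I expect the main obstacle to be the rigorous treatment of arrow $(4)$ together with the interchange of the limit in $n$ and the traces. The conceptual point is clean — precomposition with $\tilde e$ is exactly the device that cancels the logarithm $\lambda$ in the analytic pairing, through $\lambda\circ e=\mathrm{id}$ — but one must check that $e$ and its extension $\tilde e$ interact correctly with the $A_{\pr}$-action on $T_p\Phi\simeq A_{\pr}$, and that the summand $\F_p\,d\varepsilon_n/\varepsilon_n$ in the lower-left corner is accounted for by the convention $\dlog\varepsilon_n^i:=i\,d\varepsilon_n/\varepsilon_n$. All convergence and limit-exchange issues reduce to the valuation estimates of \cite[Lemma 3 and Cor.~4]{bl} already invoked in Lemma \ref{limdiff}, so what remains is bookkeeping rather than genuinely new analysis.
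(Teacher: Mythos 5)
Your proposal is correct and follows essentially the same route as the paper's proof: both composites are unwound explicitly, with $(3)\circ(2)$ producing the analytic pairing $[\;,\;]$ (via $\dlog Col_u(\varepsilon_n)=\dlog u_n/d\varepsilon_n$) and $(4)\circ(1)$ producing the Kummer pairing $(\;,\;)$ precomposed with $\tilde e$, so that commutativity reduces to part 2 of theorem \ref{theor1}. The trace factorization and limit interchange you spell out are exactly the bookkeeping the paper leaves implicit.
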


\begin{proof} Working out definitions, one sees that this is equivalent to part 2 of theorem \ref{theor1}.
More precisely $(3)\circ(2)$ sends $u=(u_n)\in\liminv K_{\Phi,n}^*$ to
the map
$$w\mapsto Tr_{K_{\Phi,m}/K}\left(\frac{w}{\pi^m}\,\dlog Col_u(\varepsilon_m)\right)\,.$$
On the other hand, the image of $u$ under $(4)\circ(1)$ is the
morphism mapping $\pi^iz$, $v(z)\gg0$, to $\pi^ig_u(e(z))$, where
$g_u\in Hom(\mathfrak{m}_{\Phi,m},A_{\pr})$ is uniquely determined
by the condition $g_u(a)\cdot\varepsilon_n=(a,u_n)_n$ for all
$n\geq m$. Recalling that {\footnotesize
$$[e(z),u_n]_n:=Tr_{K_{\Phi,n}/K}\!\!\left(\frac{\lambda(e(z))}{\pi^n}\,
\dlog
Col_u(\varepsilon_n)\right)\cdot\varepsilon_n=Tr_{K_{\Phi,m}/K}\!\!\left(\frac{z}{\pi^m}\,\dlog
Col_u(\varepsilon_m)\right)\cdot\varepsilon_n\,,$$} it is clear
that $(3)\circ(2)=(4)\circ(1)$ iff $(\cdot,\cdot)=[\cdot,\cdot]$.
\end{proof}

\begin{obs2} {\em The similarity of our diagram with the first
diagram of \S 4.2.2 is rather vague. It would be nice to express
(and prove) the reciprocity law in the cohomological setting, as
in \cite[\S3.3]{scholl}; the big problem here is to find a good
analogue of $H^1(G_{\Q_p},\mathbb{C}_p)$ in characteristic $p>0$
(a naive approach cannot work: Y. Taguchi proved that
$H^1(G_K,{\bf C}_{\pr})=0\,$). Recent developments in extending
Fontaine's theory to the equal characteristic case (for a survey
see \cite{hartl}) might be helpful}.
\end{obs2}

\end{subsection}

\end{section}

\vspace{1cm}

Francesc Bars Cortina, Depart. Matem\`atiques, Universitat Aut\`onoma de Barcelona, 08193 Bellaterra. Catalonia. Spain.\\
E-mail: francesc@mat.uab.cat \\

Ignazio Longhi,  Dipartimento di Matematica ``Federigo Enriques", Universit\`a degli Studi di Milano. Via Cesare Saldini 50, 20133 Milan. Italy.\\
E-mail: longhi@mat.unimi.it

\label{lastpageR}
\end{document}